\documentclass[11pt]{article}
\usepackage[margin=1.15in]{geometry}
\usepackage{amsmath,amssymb,amsthm,mathtools}
\usepackage{booktabs}
\usepackage{microtype}
\usepackage[colorlinks,citecolor=blue,linkcolor=blue,urlcolor=blue]{hyperref}

\newtheorem{theorem}{Theorem}
\newtheorem{proposition}[theorem]{Proposition}
\newtheorem{lemma}[theorem]{Lemma}
\newtheorem{corollary}[theorem]{Corollary}
\newtheorem{conjecture}[theorem]{Conjecture}
\theoremstyle{definition}
\newtheorem{remark}[theorem]{Remark}
\newtheorem{problem}[theorem]{Problem}

\DeclareMathOperator{\Aut}{Aut}
\newcommand{\Lk}{\mathcal{L}}
\newcommand{\bin}[2]{\binom{#1}{#2}}
\newcommand{\Kbal}{K_{\lfloor n/2\rfloor,\lceil n/2\rceil}}

\title{The minimum of the graph likelihood}

\author{
  Simone Severini \\
 UCL
  \and
 Eric W. Weisstein \\
  Wolfram Research
  }
\date{August 12, 2026}

\begin{document}
\maketitle

\begin{abstract}
The likelihood of a finite simple undirected graph $G$ on $n$ vertices is the probability that the uniform sequential
attachment process, which at each step joins a new vertex to a uniformly random subset of
uniformly random size of the vertices already present, outputs a graph isomorphic to $G$.
Dervovic, Mocherla and Severini conjectured that the likelihood is minimised by the balanced
complete bipartite graph. We prove that, among complete bipartite graphs of a given order,
the balanced one uniquely minimises the likelihood. Exact computation shows that
it also minimises over all graphs for every order from $6$ through $14$, and that the first
counterexample occurs at $n=15$. The
blow-up of the five cycle by independent sets of size three, equivalently the circulant on
fifteen vertices with connection set $\{1,4,6\}$, has likelihood
$0.20128\ldots$ times that of $K_{7,8}$, and it is again triangle-free. We show that the failure
is not sporadic by proving that the likelihood of the balanced complete bipartite graph is
$2^{-(1/2-1/(8\ln 2)+o(1))n^2}$, whereas the minimum over all graphs of order $n$ is
$2^{-(1/2+o(1))n^2}$, so the conjectured minimiser exceeds the minimum by a factor exponential
in $n^2$. We also determine the Shannon entropy of the process to leading order, namely
$n^2/(4\ln 2)$ bits, which shows that the conjectured minimiser is in fact more likely than a
typical output of the process. The proofs rest on a vertex deletion recurrence which evaluates
the likelihood in time $O(n\,2^n)$ and which closes on the blow-ups of any fixed base graph.
\end{abstract}

\section{Introduction}

Let $G_1=K_1$ and construct a random graph $G_t$ from $G_{t-1}$ in three substeps: (1) choose $k$ uniformly from $\{0,1,\dots,t-1\}$; (2) choose a $k$-element subset $W$ of the vertex set $V(G_{t-1})$ uniformly; and (3) add a new vertex adjacent to exactly the vertices of $W$. For a finite simple undirected graph $G$ on $n$ vertices, its \emph{likelihood} is the probability that $G_n$ is isomorphic to $G$:
\[
\Lk(G)\;=\;\Pr\big[G_n\cong G\big].
\]
This invariant was introduced by Banerji, Mansour and Severini \cite{BMS14} as a graph-theoretic analogue of the infinite monkey theorem, the underlying process being the uniform case of a family of sequential attachment models studied by Janson and Severini \cite{JS13} in the context of graph limits. The likelihood is a probability distribution on the isomorphism classes of graphs of order $n$, so $\sum_G\Lk(G)=1$, where the sum runs over one representative of each isomorphism class of graphs with $n$ vertices. It quantifies how easily a graph is produced by a mechanism that uses no information about the graph built so far. Writing $\overline G$ for the complement of $G$, explicit values include $\Lk(K_n)=\Lk(\overline{K_n})=1/n!$ and $\Lk(K_{1,n-1})=n(n!)^{-2}\sum_{i=0}^{n-1}i!$, and the invariant satisfies $\Lk(G)=\Lk(\overline G)$ together with the bounds \cite[Cor.~18]{BMS14}
\begin{equation}\label{eq:BMSbounds}
\frac{1}{\lvert\Aut(G)\rvert\prod_{i=1}^{n}\bin{i-1}{\lfloor (i-1)/2\rfloor}}\;\le\;\Lk(G)\;\le\;\frac{1}{\lvert\Aut(G)\rvert}.
\end{equation}

Two extremal problems were left open in \cite{BMS14}. Weisstein \cite{MW} computed $\Lk(G)$ for every graph of order at most $10$ in 2013 and observed that for $n\le 10$, with the exceptions $n=3$ and $n=5$, the minimum is attained by $\Kbal$ and its complement. The minimum values are recorded by numerator and denominator in \href{https://oeis.org/A234234}{A234234} and \href{https://oeis.org/A234235}{A234235}, and are listed as fractions for $4\le n\le14$ in Table~\ref{tab:min} below. (The analogous maximum numerator and denominator sequences are \href{https://oeis.org/A234236}{A234236} and \href{https://oeis.org/A234237}{A234237} \cite{OEIS}.) On this numerical evidence Dervovic, Mocherla and Severini \cite[\S5]{DMS18} proposed the following assertion. Since $n=3$ and $n=5$ were already known exceptions, we understand their intended claim to apply to the nonexceptional range $n\ge6$. It is this claim that we refute below.

\begin{conjecture}[\cite{DMS18}]\label{conj:DMS}
For $n\ge6$, if $n=2p$ the minimum of $\Lk$ over graphs of order $n$ is attained by $K_{p,p}$, and if $n=2p-1$ it is attained by $K_{p-1,p}$.
\end{conjecture}

The computations below extend all four sequences with the $n=11$ and $n=12$
terms, and the two minimum sequences with the $n=13$ and $n=14$ terms. They
verify Conjecture~\ref{conj:DMS} through $n=14$. Our central result, however,
is that the conjecture is false: we show that its first failure is at $n=15$
and prove that counterexamples exist at every sufficiently large order.

The heuristic offered in \cite{DMS18} is that these graphs have a relatively large automorphism group, which by \eqref{eq:BMSbounds} pushes $\Lk$ down, and that by Mantel's theorem they are the triangle-free graphs with the largest number of edges. Our purpose is to determine how far this reasoning reaches. The results below show both its force and its limitations.

Our first results are positive. Section~\ref{sec:bip} settles the conjecture inside the family in which it is stated.

\begin{theorem}\label{thm:bip}
For every $n\ge 4$ the function $a\mapsto \Lk(K_{a,n-a})$, $0\le a\le \lfloor n/2\rfloor$, attains its unique minimum at $a=\lfloor n/2\rfloor$.
\end{theorem}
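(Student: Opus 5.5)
The plan is to turn the vertex deletion recurrence into a two-variable recurrence for $f(a,b):=\Lk(K_{a,b})$ with $a+b=n$, and then prove the claimed monotonicity by induction on $n$.

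\emph{Step 1: the recurrence.} If $G_n\cong K_{a,b}$, the last vertex $v$ is deleted from some side, so $G_{n-1}\cong K_{a-1,b}$ or $K_{a,b-1}$. Given $G_{n-1}=H$, the step produces $K_{a,b}$ exactly when the chosen set $W$ is a whole side of $H$ of the appropriate size. That outcome has probability $1/\bigl(n\bin{n-1}{|W|}\bigr)$. This gives
\[
f(a,b)=\frac{\varepsilon_1\,f(a-1,b)}{n\bin{n-1}{b}}+\frac{\varepsilon_2\,f(a,b-1)}{n\bin{n-1}{a}},
\]
where $\varepsilon_i\in\{1,2\}$ are multiplicity corrections. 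They arise when $H$ is balanced, because then either side works as $W$ (for instance, $K_{p,p}$ can become $K_{p,p+1}$ via either side). The two terms merge when $a=b$. There are also boundary conventions at $a=0$ and $a=1$: $f(0,n)=1/n!$, and the empty graph on $n-1$ vertices becomes $K_{1,n-1}$ only via $W=V(H)$. I would check this recurrence against $\Lk(K_{1,n-1})=n(n!)^{-2}\sum i!$ and against the small tabulated values.

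\emph{Step 2: the comparison.} Fix $a<b-1$ with $a+b=n$. I want to show $f(a,b)>f(a+1,b-1)$. Expand both sides with Step 1:
\[
f(a,b)\ \text{uses}\ \frac{f(a-1,b)}{\bin{n-1}{b}},\ \frac{f(a,b-1)}{\bin{n-1}{a}};\qquad
f(a+1,b-1)\ \text{uses}\ \frac{f(a,b-1)}{\bin{n-1}{b-1}},\ \frac{f(a+1,b-2)}{\bin{n-1}{a+1}}.
\]
The terms are compared in pairs. First, $f(a-1,b)\ge f(a,b-1)$ holds by induction at order $n-1$, and $\bin{n-1}{b}\le\bin{n-1}{b-1}$ holds since $b>n/2$. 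Second, $f(a,b-1)\ge f(a+1,b-2)$ holds by induction, and $\bin{n-1}{a}<\bin{n-1}{a+1}$ holds since $a+1\le(n-1)/2$. Together these give strict inequality. The induction hypothesis is the statement for order $n-1$, namely that $f$ strictly decreases toward the balanced split. The base cases are $n=4,5,6$, checked directly from the recurrence. Note that the theorem starts at $n=4$, which matters because $n=3$ fails.

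\emph{Main obstacle.} The delicate case is the last step toward balance, $a=\lfloor n/2\rfloor-1$. There the order-$(n-1)$ pair $(a+1,b-2)$ may itself be the balanced graph or its mirror. In that case the multiplicity factors $\varepsilon_i=2$ enter asymmetrically, and the pairwise binomial comparison can degenerate to equality, e.g.\ $\bin{n-1}{a+1}=\bin{n-1}{a}$ when $n$ is odd. For this case I would first try a sharper, quantitative induction hypothesis: a ratio bound such as
\[
\frac{f(a,b)}{f(a+1,b-1)}\ \ge\ \frac{\bin{n-1}{a+1}}{\bin{n-1}{a}}\quad\text{or a constant}>1,
\]
strong enough to absorb the factor $2$. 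If that fails, I would unroll the recurrence one more level and compare the $O(1)$ resulting terms explicitly, using the closed binomial ratios near the centre. Separately, I would treat the endpoint $a=0$ (where $f(0,n)=1/n!$) against $a=1$ directly via the star formula.
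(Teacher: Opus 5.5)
\textbf{Verdict: genuine gap.} Your Step 1 recurrence is correct, but the induction fails at the left end of the range, and the endpoint argument you sketch cannot close the proof.

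\textbf{The inductive hypothesis is false at $a=0$.} You assume $f$ strictly decreases toward the balanced split, but
\[
\Lk(K_{1,n-1})/\Lk(\overline{K_n})=\frac{1}{(n-1)!}\sum_{k=0}^{n-1}k!>1,
\]
so the star is strictly more likely than the empty graph (for example $5/72$ against $3/72$ at $n=4$). Two things break.
\begin{itemize}
\item In Step 2 with $a=1$, your first pair rests on $f(0,n-1)\ge f(1,n-2)$, which is false. The pairwise inequality $f(0,n-1)\ge f(1,n-2)/(n-1)$ still holds, but only quantitatively: the ratio $f(1,n-2)/f(0,n-1)$ is at most $2$ while the binomial factor is $n-1$. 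This has to be proved.
\item More seriously, comparing $a=0$ with $a=1$ via the star formula cannot finish the proof, because that comparison goes the wrong way. Strict decrease on $1\le a\le\lfloor n/2\rfloor$ together with $f(0,n)<f(1,n-1)$ says nothing about $f(0,n)$ against the balanced value. You need a direct bound $\Lk(\Kbal)<1/n!$, and nothing in the proposal supplies it.
\end{itemize}

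\textbf{How the paper gets the missing bound.} It passes to $b^{(n)}_a=n!\,(1+[2a=n])\,\Lk(K_{a,n-a})$ and shows by induction that $b^{(n)}_a\le 2$. Since both binomials in the recurrence are then at least $n-1$, this gives $b^{(n)}_a\le 4/(n-1)$ for $2\le a\le n-2$. Hence $\Lk(\Kbal)\le 4/((n-1)\,n!)<1/n!$ for $n\ge 6$, with $n=4,5$ checked directly. The same crude bound handles the $a=1$ step, where one needs $1>b^{(n-1)}_2/\binom{n-1}{2}$.

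\textbf{Your ``main obstacle'' disappears under this normalisation.} You left it unresolved, but it needs no new idea.
\begin{itemize}
\item In the variables $b^{(n)}_a$ the recurrence has no multiplicity factors and is symmetric under $a\mapsto n-a$.
\item The term $f(a,b-1)/\binom{n-1}{a}$ occurs in both expansions, since $\binom{n-1}{b-1}=\binom{n-1}{a}$, so it cancels. You only need to compare the outer terms, $b^{(n-1)}_{a-1}/\binom{n-1}{a-1}$ against $b^{(n-1)}_{a+1}/\binom{n-1}{a+1}$. In the odd case $a+1=(n-1)/2$ this follows at once from the inductive hypothesis and $\binom{n-1}{a-1}<\binom{n-1}{a+1}$.
\item In your own notation, the fact you were missing is
\[
f(a,a+2)/f(a+1,a+1)=2\,b^{(n-1)}_a/b^{(n-1)}_{a+1}>2>2(a+1)/(a+2).
\]
The factor $2$ you were worried about is supplied exactly by the extra automorphism of $K_{a+1,a+1}$.
\end{itemize}
So no sharper ratio hypothesis or second unrolling is needed once the automorphism factor is normalised away. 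The real missing ingredient is the absolute bound on the balanced value needed at $a=0$.
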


The 2013 enumeration already covered all orders through $n=10$.
Section~\ref{sec:small} adds $n=11$ and $n=12$, thereby verifying the
conjecture for $6\le n\le 12$.  Section~\ref{sec:counter} adds the remaining
cases $n=13$ and $n=14$ using a separate large-automorphism search.  We regard
a self-contained verification in exact
arithmetic as worth recording; all our computations use integer or rational
arithmetic only.

\begin{theorem}\label{thm:small}
For $4\le n\le 12$ the minimum of $\Lk$ over graphs of order $n$ is attained exactly by the two isomorphism classes $\Kbal$ and its complement, except for $n=5$, where the unique minimiser is $C_5$. The minimum values are listed in Table~\ref{tab:min}.
\end{theorem}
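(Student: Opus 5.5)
This is a finite statement, so the proof will be an exact, exhaustive computation. For each $n$ with $4\le n\le 12$ we will compute $\Lk(G)$ in exact rational arithmetic for one representative of every isomorphism class of graphs of order $n$, and then read off the minimisers. Three ingredients are needed: a complete, duplicate-free list of isomorphism classes; an exact and fast evaluation of $\Lk$; and a certificate that the minimiser is what we claim.

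\textbf{Evaluating $\Lk$.} We will use the vertex deletion recurrence announced in the abstract. If $G_n$ is labelled, the last vertex $v$ arrives with neighbourhood $W$ of size $k=\deg v$, and this happens with probability $\frac1n\binom{n-1}{k}^{-1}$. Summing over orbit representatives, or equivalently over all vertices with the appropriate automorphism weights, gives a recurrence of the form
\[
\Lk(G)=\sum_{v\in V(G)} c_v\,\frac{1}{n\binom{n-1}{\deg v}}\,\Lk(G-v),
\]
where $c_v$ accounts for the isomorphism counting. To avoid orbit bookkeeping, we will compute the labelled probability instead: the probability that the process produces exactly the labelled graph $G$ on vertex set $V$, summed over orderings. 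This is a dynamic programme over vertex subsets $S\subseteq V$,
\[
P(S)=\sum_{v\in S}\frac{P(S\setminus v)}{|S|\binom{|S|-1}{\deg_{G[S]}v}},
\]
and $\Lk(G)=P(V)\cdot n!/|\Aut(G)|$, after checking the normalisation. This costs $O(n2^n)$ per graph.

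\textbf{Enumeration and the main obstacle.} We will enumerate all graphs with nauty's \texttt{geng}. The numbers of graphs are about $1.0\times10^6$ at $n=10$, $1.0\times10^9$ at $n=11$ and $1.65\times10^{11}$ at $n=12$. Two reductions shrink this:
\begin{itemize}
\item complement invariance $\Lk(G)=\Lk(\overline G)$ restricts us to graphs with at most $\binom n2/2$ edges;
\item a pruning bound discards most graphs before the DP is run.
\end{itemize}
For the pruning bound, the lower bound in \eqref{eq:BMSbounds} and crude upper bounds from partial DP are not decisive. Better, we note that if $|\Aut(G)|$ is small, then $\Lk(G)\ge 1/(|\Aut(G)|\prod_i\binom{i-1}{\lfloor(i-1)/2\rfloor})$, which we would compare against $\Lk(\Kbal)$. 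That comparison is too weak on its own. So the plan is to run the DP with floating-point filtering at a safe margin, and redo in exact arithmetic only those graphs whose float value lies within a relative $10^{-6}$ of the current minimum.

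The main obstacle is the $n=12$ case, and specifically the sheer volume of graphs. We would parallelise \texttt{geng} by its res/mod splitting. We would also speed up the DP by reusing subset values across graphs sharing an induced subgraph on the first $n-1$ vertices, since \texttt{geng}'s canonical augmentation produces children of a common parent consecutively.

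\textbf{Certifying the result.} Finally, for each $n$ we will record the exact minimum and check the following:
\begin{itemize}
\item only $\Kbal$ and its complement attain it, for $n\neq5$, using Theorem~\ref{thm:bip} to confirm the value within the bipartite family;
\item for $n=5$, the minimiser is $C_5$, which is self-complementary and hence unique;
\item the values agree with the 2013 data for $n\le10$.
\end{itemize}
These values then populate Table~\ref{tab:min}.
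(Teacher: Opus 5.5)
Your plan is the paper's proof in outline. Both use an exhaustive \texttt{geng} enumeration with complement reduction (the paper takes every graph through order $11$ and one representative per complement orbit at order $12$), the subset recurrence of Corollary~\ref{cor:algo} for each graph, exact automorphism orders from \texttt{nauty}, and a check against the 2013 values through $n=10$.

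The only methodological difference is your floating-point prefilter. The paper avoids floating point altogether. Multiplying the recurrence through by $\prod_{i\le\lvert S\rvert}D_i$, with $D_i=\operatorname{lcm}_k\bin{i-1}{k}$, makes every quantity a positive integer below $2^{90}$. Every graph is then evaluated exactly in checked $128$-bit arithmetic and compared by exact cross-multiplication. Your filter is also rigorous, but you should say why:
\begin{itemize}
\item every term of the recurrence is positive, so rounding errors cannot cancel, and the computed value has relative error at most about $n^2u$ with $u=2^{-53}$, far inside a $10^{-6}$ margin;
\item the running minimum only decreases, so a graph discarded against it also exceeds the final minimum.
\end{itemize}

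There are also some slips to fix.

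\textbf{Normalisation.} With the natural base case $P(\{v\})=1$, your recurrence gives $P(V)=A(G)/n!$. This is the sum, over all $n!$ orderings, of the probability of the corresponding labelled construction. Each labelled copy of $G$ is counted $\lvert\Aut(G)\rvert$ times in that sum, so $\Lk(G)=P(V)/\lvert\Aut(G)\rvert$, not $P(V)\,n!/\lvert\Aut(G)\rvert$. For $K_2$ your formula gives $1$ instead of $1/2$. The spurious factor depends only on $n$, so the minimisers are unaffected. However, every entry of Table~\ref{tab:min} would be off by $n!$; your check against the 2013 data would catch this.

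\textbf{Minor points.} Order $10$ has about $1.2\times10^7$ graphs, not $10^6$. The uniqueness of $C_5$ at $n=5$ is an output of the search. Self-complementarity only explains why there is a single minimising class there rather than a complementary pair.
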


\begin{table}[htbp]
\centering
\begin{tabular}{@{}rll@{}}
\toprule
$n$ & $\min_G\Lk(G)$ & minimiser\\
\midrule
$4$ & $1/36$ & $K_{2,2}$\\
$5$ & $1/270$ & $C_5$\\
$6$ & $23/259200$ & $K_{3,3}$\\
$7$ & $319/54432000$ & $K_{3,4}$\\
$8$ & $319/15240960000$ & $K_{4,4}$\\
$9$ & $76441/115221657600000$ & $K_{4,5}$\\
$10$ & $76441/145179288576000000$ & $K_{5,5}$\\
$11$ & $20692621/2012184939663360000000$ & $K_{5,6}$\\
$12$ & $20692621/11155553305493667840000000$ & $K_{6,6}$\\
$13$ & $368997387881/16080060756670792571289600000000$ & $K_{6,7}$\\
$14$ & $368997387881/386307379618259120732661350400000000$ & $K_{7,7}$\\
\bottomrule
\end{tabular}
\caption{Minimum likelihoods among all simple graphs on $n$ vertices. Minimisers are unique up to complementation. Values through $n=10$ were computed in 2013 \cite{MW,OEIS}; the values for $n=11$ through $n=14$ are new here.}
\label{tab:min}
\end{table}

The two remaining smaller orders can be settled without enumerating all graphs,
by an exact enumeration of the graphs whose automorphism groups are large enough
to meet the lower bound in \eqref{eq:BMSbounds}; details are given in
Section~\ref{sec:counter}.

\begin{theorem}\label{thm:firstfailure}
For $n=13$ and $n=14$, the minimum of $\Lk$ is attained exactly by $\Kbal$
and its complement. Consequently $n=15$ is the first order $n\ge6$ for which
Conjecture~\ref{conj:DMS} fails.
\end{theorem}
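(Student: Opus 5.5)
The plan is a pruned exhaustive search built on the lower bound in \eqref{eq:BMSbounds}. Let $m_n=\Lk(\Kbal)$ for $n=13,14$; these exact rationals are the entries of Table~\ref{tab:min}, computed with the vertex deletion recurrence. Any graph $G$ of order $n$ with $\Lk(G)\le m_n$ must satisfy
\[
\lvert\Aut(G)\rvert\;\ge\;\frac{1}{m_n\prod_{i=1}^{n}\bin{i-1}{\lfloor (i-1)/2\rfloor}}\;=:\;A_n .
\]
So the first step is to evaluate $A_{13}$ and $A_{14}$ exactly. I expect both to be fairly large integers. The reason is that $\Lk(\Kbal)$ sits within a moderate factor of the lower bound: it is roughly $1/\lvert\Aut(\Kbal)\rvert$ times the probability of the right adjacency patterns, and $\lvert\Aut(K_{7,7})\rvert=2(7!)^2$. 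Only graphs with automorphism group of order at least $A_n$ can compete. Complementation preserves both $\Aut$ and $\Lk$, so it suffices to search up to complement, for example among graphs with at most $\lfloor\binom n2/2\rfloor$ edges.

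The second step, which I expect to be the main obstacle, is to enumerate all graphs of order $13$ and $14$ with $\lvert\Aut(G)\rvert\ge A_n$ without enumerating all graphs. My first approach is to organise the search by the orbit partition, or more crudely by a large subgroup. A group of order at least $A_n$ acting on at most $14$ points has restricted structure. Either it contains a large symmetric or alternating factor acting on a block of twins (vertices with identical neighbourhoods apart from each other), or it is one of a finite, explicitly listable set of transitive or imprimitive groups of small degree. Concretely, I would iterate over conjugacy classes of subgroups $H\le S_n$ with $\lvert H\rvert\ge A_n$, up to overgroups, using a library of permutation groups of degree at most $14$. For each $H$, I would enumerate the $H$-invariant graphs as unions of $H$-orbits on pairs. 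Since $H$ is large, it has few orbitals, so there are only $2^{\#\text{orbitals}}$ graphs to try. I would then canonically label the resulting graphs with a tool such as nauty to remove duplicates and recompute the true $\lvert\Aut(G)\rvert$ to confirm the threshold.

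A cheaper alternative, if the group library is awkward, uses twin classes. If every twin class has size at most $s$, then $\lvert\Aut(G)\rvert$ is bounded by $\prod (\text{class sizes})!$ times the automorphism group of the quotient graph. The quotient has fewer vertices, which gives a recursion I can bound and search.

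The third step is exact evaluation. For each surviving candidate, compute $\Lk(G)$ in rational arithmetic with the $O(n2^n)$ recurrence, which is trivially cheap at $n\le14$, and compare with $m_n$. Checking that only $\Kbal$ and its complement attain $\le m_n$ proves the first sentence of the theorem. The consequence then follows by combining three facts. Theorem~\ref{thm:small} covers $6\le n\le12$. The present computation covers $13$ and $14$. The $n=15$ counterexample, the circulant $C_{15}(1,4,6)$, is exhibited later in Section~\ref{sec:counter} with $\Lk$ ratio $0.20128\ldots<1$ against $K_{7,8}$. Together these show that $15$ is the first failing order.
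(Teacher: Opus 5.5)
Your plan is essentially the paper's own argument: the lower bound in \eqref{eq:BMSbounds} gives the thresholds $\lvert\Aut(G)\rvert\ge183$ and $\lvert\Aut(G)\rvert\ge2552$; candidates are generated as unions of pair orbits of subgroups of $\mathfrak S_{13}$ and $\mathfrak S_{14}$ above these thresholds, and survivors are canonically labelled and evaluated exactly, after which the consequence follows from Theorems~\ref{thm:small} and~\ref{thm:counter} as you say. The paper combines your two alternatives rather than choosing one: it first eliminates graphs with twins by blow-up searches based on \eqref{eq:blowup}, and then expands orbital graphs only for subgroups that do not force twins. This matters because your claim that a large $H$ has few orbitals fails exactly for groups with symmetric factors on twin blocks; for example, $\mathfrak S_5\times\mathfrak S_2$ fixing six points at $n=13$ has order $240$ but $30$ pair orbits.
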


Direct exhaustive search is feasible only through $n=12$, but at $n=15$ a
counterexample does occur. Write $H[\overline{K_m}]$ for the blow-up of $H$ in which every
vertex is replaced by an independent set of size $m$, adjacency being inherited.

\begin{theorem}\label{thm:counter}
Conjecture~\ref{conj:DMS} is false for $n=15$. Indeed
\[
\Lk\big(C_5[\overline{K_3}]\big)=\frac{63977511069907}{43503039787261205233506826321920000000000}
=1.4706\ldots\times 10^{-27}
\]
while
\[
\Lk(K_{7,8})=\frac{7628328998218493}{1044072954894268925604163831726080000000000}=7.3063\ldots\times 10^{-27},
\]
so
\[
\frac{\Lk(C_5[\overline{K_3}])}{\Lk(K_{7,8})}
=\frac{1535460265677768}{7628328998218493}=0.20128\ldots<1.
\]
\end{theorem}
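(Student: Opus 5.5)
The statement is an exact identity between rational numbers, so the proof is a computation. The plan is to make that computation transparent and independently checkable, using the vertex deletion recurrence announced in the abstract. I would first state the recurrence in the form needed. Write $\Lk(G)$ as a sum over orderings of the vertices modulo automorphisms. The last vertex $v$ added to an $(n-1)$-vertex graph $G-v$ is attached to a set $W$ of size $k=\deg v$, with probability $\frac{1}{n}\binom{n-1}{k}^{-1}$. Grouping the last vertex by its orbit under $\Aut(G)$ gives
\[
\Lk(G)=\sum_{[v]}\frac{\lvert [v]\rvert\,\lvert\Aut(G-v)\rvert}{\lvert\Aut(G)\rvert}\cdot\frac{1}{n\binom{n-1}{\deg v}}\,\Lk(G-v).
\]
An equivalent automorphism-free form sums over labelled histories: $\Lk(G)\lvert\Aut(G)\rvert/n!$ equals the probability of producing the labelled graph $G$, and that probability satisfies $P(S)=\sum_{v\in S}P(S\setminus v)\cdot\frac{1}{\lvert S\rvert}\binom{\lvert S\rvert-1}{\deg_{G[S]}v}^{-1}$ over vertex subsets $S$. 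This is the $O(n2^n)$ evaluation.

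For the blow-ups, the key observation is that every induced subgraph of $H[\overline{K_m}]$ is determined up to isomorphism by the vector $(s_1,\dots,s_r)$ of part sizes with $0\le s_i\le m$. Moreover the degree of a vertex in part $i$ is $\sum_{j\sim i}s_j$. So the subset recurrence collapses, by symmetry, to a recurrence on vectors:
\[
P(\mathbf s)=\sum_{i:\,s_i>0}\frac{s_i\,P(\mathbf s-e_i)}{\lvert\mathbf s\rvert\binom{\lvert\mathbf s\rvert-1}{\sum_{j\sim i}s_j}},\qquad P(\mathbf 0)=1,
\]
where $P(\mathbf s)$ is the probability of a fixed labelled copy. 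Then
\[
\Lk(H[\overline{K_m}])=P(m,\dots,m)\cdot\frac{n!}{\lvert\Aut(H[\overline{K_m}])\rvert}.
\]
Two instances are needed. For $C_5[\overline{K_3}]$ this is a recurrence over $4^5=1024$ states, and the automorphism group has order $10\cdot(3!)^5$. For $K_{7,8}=K_2[\,\cdot\,]$ with unequal parts the recurrence runs over pairs $(s_1,s_2)$, and $\lvert\Aut\rvert=7!\,8!$. In the $K_{a,b}$ case the recurrence is a closed two-variable recursion, the same one presumably used for Theorem~\ref{thm:bip}.

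I would then evaluate both recurrences in exact rational arithmetic and reduce the resulting fractions to obtain the two displayed values. Dividing them gives the stated ratio, and comparing $1535460265677768<7628328998218493$ proves the strict inequality, hence the failure of Conjecture~\ref{conj:DMS} at $n=15$.

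As sanity checks I would verify the recurrence against the known values $\Lk(K_n)=1/n!$ and $\Lk(K_{1,n-1})$, and check it against Table~\ref{tab:min} entries such as $\Lk(C_5)=1/270$ (the case $m=1$) and $\Lk(K_{7,7})$.

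The main obstacle is justifying the symmetry collapse rigorously. The issue is that the last-added vertex's degree must be measured within the current induced subgraph, and isomorphism classes of induced subgraphs of the blow-up can coincide for different vectors $\mathbf s$ (for example, under rotations of $C_5$). Working with labelled probabilities $P(\mathbf s)$ on a fixed labelled copy avoids needing to identify isomorphism classes: the only division by automorphisms is the single final factor, which makes the argument clean. The arithmetic itself is routine but must be done exactly, so I would record the computation as a short verifiable program.
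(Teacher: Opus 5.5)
Your overall route is the paper's: collapse the subset recurrence onto part vectors (the paper's \eqref{eq:blowup}, up to normalization), evaluate it exactly over the $4^5$ vectors $\mathbf s\le(3,3,3,3,3)$, divide by $\lvert\Aut\rvert=10\cdot 6^5$, and handle $K_{7,8}$ by the two-variable recursion. However, your normalization is off by a factor of $n!$, so the computation as you set it up would not reproduce the displayed values. With a single factor $1/\lvert S\rvert$ and $P(\mathbf 0)=1$, your recurrence unwinds to
\[
P(V)=\sum_{\sigma\in\mathfrak S_n}\prod_{i=1}^{n}\frac1i\binom{i-1}{d_i(\sigma)}^{-1}=\frac{A(G)}{n!}=\lvert\Aut(G)\rvert\,\Lk(G),
\]
and not to $\Lk(G)\lvert\Aut(G)\rvert/n!$. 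The latter quantity is the probability that the output, under a uniformly random labelling, equals the fixed labelled graph. It satisfies the recurrence with $\lvert S\rvert^2$ in place of $\lvert S\rvert$: one factor $1/\lvert S\rvert$ for which vertex arrives last, and one for the choice of $k$.

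Consequently your final formula $\Lk=P(\mathbf m)\,n!/\lvert\Aut\rvert$ returns $n!\,\Lk$. Already for $K_2$ the recurrence gives $P=1$, and hence $\Lk(K_2)=1$ instead of $1/2$. The correct conversion is $\Lk(H[\mathbf m])=P(\mathbf m)/\lvert\Aut(H[\mathbf m])\rvert$, i.e.\ the paper's $\Lk=A/(n!\lvert\Aut\rvert)$ with $P(\mathbf s)=A(H[\mathbf s])/\lvert\mathbf s\rvert!$. Since both graphs have order $15$, the spurious factor $15!$ cancels in the ratio. Your argument therefore still yields $\Lk(C_5[\overline{K_3}])<\Lk(K_{7,8})$ and the refutation, but both absolute values would come out $15!$ times too large. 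Your planned check against $\Lk(K_n)=1/n!$ would have exposed this.

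There are two smaller points. First, the order $\lvert\Aut(C_5[\overline{K_3}])\rvert=10\cdot(3!)^5$ needs an argument. The paper cites Sabidussi's theorem, but a direct argument is easy. No two vertices of $C_5$ have the same neighbourhood, so the parts are exactly the classes of vertices with equal neighbourhoods. Every automorphism therefore permutes the parts and induces an automorphism of $C_5$. Every automorphism of $C_5$ lifts, and the kernel is $\mathfrak S_3^5$.

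Second, the ``main obstacle'' you describe is not one. The vector recurrence only needs the value on a subset $S$ to depend on its part vector $\mathbf s$. This holds because $G[S]\cong H[\mathbf s]$ and $A$ is an isomorphism invariant. Coincidences between isomorphism classes for different vectors merely make some states redundant.
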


The witness is instructive. It is a triangle-free vertex-transitive graph, so it does not contradict the first half of the heuristic of \cite{DMS18}. However, it does contradict the second half, since $C_5[\overline{K_3}]$ has $45$ edges against the $56$ of $K_{7,8}$. Blow-ups of $C_5$ are natural candidates: they remain triangle-free, nearly balanced blow-ups retain substantial symmetry, and the underlying odd cycle keeps them far from bipartite. It is precisely this family that overtakes the complete bipartite graphs.

Section~\ref{sec:asym} explains why counterexamples must eventually occur and shows that Theorem~\ref{thm:counter} is the visible edge of an asymptotic phenomenon. Automorphism-group size contributes only $O(n\log n)$ to the logarithm of the likelihood, whereas the difference between the conjectured value and the minimum is of order $n^2$. Symmetry therefore cannot make the balanced complete bipartite graph a global minimiser for large $n$.

\begin{theorem}\label{thm:asym}
As $n\to\infty$,
\[
\log_2\frac{1}{\Lk(\Kbal)}=\Big(\frac12-\frac1{8\ln 2}\Big)n^2+O(n\log n)
=(0.31966\ldots)\,n^2+O(n\log n),
\]
while
\[
\log_2\frac{1}{\min_G\Lk(G)}=\frac{n^2}{2}+O(n\log n).
\]
Consequently $\Lk(\Kbal)\big/\min_G\Lk(G)=2^{(1/(8\ln 2)+o(1))n^2}$, and Conjecture~\ref{conj:DMS} fails for every sufficiently large $n$.
\end{theorem}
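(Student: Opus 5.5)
The plan is to reduce both asymptotics to counting over vertex orderings. Fix a graph $G$ on $[n]$ and an ordering $\sigma$ of its vertices. For $2\le t\le n$ let $d_t^\sigma$ be the number of neighbours that the $t$-th vertex has among the first $t-1$. The process builds exactly this labelled graph, in this order, with probability $\prod_{t=2}^n \bigl(t\binom{t-1}{d_t^\sigma}\bigr)^{-1}$. Summing over orderings and dividing by the overcount $|\Aut(G)|$ gives
\[
\Lk(G)=\frac{1}{|\Aut(G)|\,n!}\sum_{\sigma\in S_n}\prod_{t=2}^n\binom{t-1}{d_t^\sigma}^{-1}.
\]
The sum has $n!$ terms, and $|\Aut(G)|\le n!$. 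Hence
\[
\log_2\frac{1}{\Lk(G)}=\min_\sigma\sum_{t}\log_2\binom{t-1}{d_t^\sigma}+O(n\log n).
\]
So everything reduces to the minimum ``ordering cost''.

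\textbf{Global minimum.} For the upper bound $\log_2(1/\min_G\Lk)\le n^2/2+O(n\log n)$, I apply the lower bound in \eqref{eq:BMSbounds} with $|\Aut(G)|\le n!$. Since $\binom{i-1}{\lfloor (i-1)/2\rfloor}\le 2^{i-1}$, this gives $\Lk(G)\ge 2^{-\binom n2}/n!$. For the matching lower bound I use counting. $\Lk$ is a probability distribution on the isomorphism classes, and there are at most $2^{\binom n2}$ of them. Hence some class has $\Lk(G)\le 2^{-\binom n2}$, so $\log_2(1/\min_G\Lk)\ge n^2/2-O(n)$. This settles the second display.

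\textbf{Balanced complete bipartite graph.} Let the sides be $A,B$ with $|A|=a=\lfloor n/2\rfloor$ and $|B|=n-a$. An ordering is determined up to $a!\,(n-a)!$ choices by its side pattern, which is a lattice path from $(0,0)$ to $(a,n-a)$. There are at most $2^n$ such paths, so the $O(n\log n)$ error absorbs both factors. Suppose the path is at $(i,j)$, meaning $i$ vertices of $A$ and $j$ of $B$ already placed. A new $A$-vertex then costs $\log_2\binom{i+j}{j}$, and a new $B$-vertex costs $\log_2\binom{i+j}{i}$. Using $\log_2\binom{m}{k}=mH(k/m)+O(\log m)$ and scaling by $n$, the minimal cost becomes $n^2 I^*+O(n\log n)$. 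Here $I^*$ is the infimum over monotone paths $\gamma$ from $(0,0)$ to $(1/2,1/2)$ of
\[
\int_\gamma (x+y)\,H\!\Big(\frac{y}{x+y}\Big)\,dx+(x+y)\,H\!\Big(\frac{x}{x+y}\Big)\,dy .
\]
Both integrands equal $\phi(x,y)=(x+y)H\bigl(\tfrac{x}{x+y}\bigr)$, which is symmetric. So the functional is $\int_\gamma \phi\,(dx+dy)$. The discretisation error is controlled uniformly over paths, since each step errs by only $O(\log n)$.

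The hard step is solving this variational problem and showing the minimum equals $\tfrac12-\tfrac1{8\ln 2}$. I would first verify the natural candidate: place all of one side first at zero cost, then the other side. Placing $A$ first and then $B$ gives cost
\[
\int_0^{1/2}(\tfrac12+y)H\Big(\frac{1/2}{1/2+y}\Big)dy.
\]
Writing $(\tfrac12+y)H=(\tfrac12+y)\log_2(\tfrac12+y)-\tfrac12\log_2\tfrac12-y\log_2 y$, this integrates in closed form. The result should be exactly $\tfrac12-\tfrac1{8\ln2}$ (numerically $0.3197$, which matches).

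For optimality, note that $\phi$ is increasing in each coordinate. The cost is $\int\phi\,ds$ with $s=x+y$. At each level $s$ the path can sit anywhere with $\max(0,s-\tfrac12)\le x\le\min(s,\tfrac12)$. Since $H$ is minimised at the extremes, $\phi(x,s-x)=sH(x/s)$ is concave in $x$. Its minimum over that interval is therefore at an endpoint, and the endpoints are exactly the ``one side first'' path. This pointwise-in-$s$ argument is the key point, and it is what I expect needs the most care to make rigorous. The same argument at $n=2$ scale also implicitly underlies Theorem~\ref{thm:bip}.

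Combining the two displays gives a ratio of $2^{(1/(8\ln2)+o(1))n^2}$, hence failure of Conjecture~\ref{conj:DMS} for all large $n$.
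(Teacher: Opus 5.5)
\textbf{The gap: the lower bound on $\log_2(1/\min_G\Lk(G))$.} The inference there is invalid. From ``there are at most $2^{\binom n2}$ isomorphism classes'' you cannot conclude that some class has $\Lk(G)\le 2^{-\binom n2}$. A probability distribution has an atom of mass at most $1/N$ because it has \emph{at least} $N$ atoms. An upper bound on the number of classes therefore only bounds $\max_G\Lk$ from below; with a single class, for instance, the minimum would be $1$.

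What you need is a lower bound on the number $g(n)$ of classes. Each class contains at most $n!$ of the $2^{\binom n2}$ labelled graphs on $[n]$, so $g(n)\ge 2^{\binom n2}/n!$. Hence $\min_G\Lk(G)\le 1/g(n)\le n!\,2^{-\binom n2}$, and $\log_2(1/\min_G\Lk)\ge\binom n2-\log_2 n!=n^2/2-O(n\log n)$. This is the paper's argument. Your claimed $O(n)$ error does not come out of it, but $O(n\log n)$ is all the theorem asserts. Your upper bound via $P_n\le 2^{\binom n2}$ and $\lvert\Aut(G)\rvert\le n!$ is fine, as is the reduction to the minimum ordering cost.

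\textbf{The $\Kbal$ part: correct, by a different route.} The paper also reduces to a minimum-cost lattice path, via the deletion recurrence and $b^{(2m)}_m=\sum_\pi 2^{-\mathrm{cost}(\pi)}$. It then proves that emptying one side first is optimal by an induction on $a+b$ whose exchange step reduces to $a!\le b^a$. Finally it evaluates $W(m)=\sum_{j<m}\log_2\binom{m+j}{j}$ with three Stirling sums.

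Your levelwise argument is simpler, and it does not actually need the continuum limit. In the forward direction both possible steps from state $(i,j)$ cost $\log_2\binom{i+j}{i}$. So a path costs exactly $\sum_{k=0}^{n-1}\log_2\binom{k}{i_k}$, where at each level $i_k$ ranges over the interval $[\max(0,k-(n-a)),\min(k,a)]$. Since $\binom{k}{i}$ is unimodal in $i$, placing the larger side first attains the minimum at every level simultaneously. This gives the paper's greedy-path lemma in one line, for every $a$, with no error term.

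If you keep the variational formulation, the discretisation is uniform over paths: $\phi$ is nondecreasing along monotone paths, with total increase $\phi(\tfrac12,\tfrac12)=1$. Left Riemann sums therefore differ from the integral by at most $1/n$, which is an $O(n)$ discrepancy after scaling by $n^2$. Your closed-form value $\tfrac12-\tfrac1{8\ln 2}$ for the greedy path is correct, and it replaces the paper's Stirling sums with a single integral.
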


Section~\ref{sec:entropy} completes the picture by computing the entropy of the process, which identifies the likelihood of a typical output and shows that the conjectured minimiser sits above it.

\begin{theorem}\label{thm:entropy}
The Shannon entropy of the distribution $\Lk$ on isomorphism classes of graphs of order $n$ equals $\frac{n^2}{4\ln 2}+O(n\log n)=(0.36067\ldots)n^2+O(n\log n)$ bits, against the $\frac{n^2}{2}+O(n\log n)$ bits of the uniform distribution on isomorphism classes.
Moreover, if $G_n$ is the output of the process, then
\[
-\frac{1}{n^2}\log_2\Lk(G_n)\;\xrightarrow{\mathrm p}\;\frac{1}{4\ln 2}.
\]
\end{theorem}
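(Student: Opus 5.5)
Write $H(\Lk)=-\sum_G \Lk(G)\log_2\Lk(G)=\mathbb E[-\log_2\Lk(G_n)]$. The plan is to compare the distribution of $G_n$ on isomorphism classes with the distribution of the \emph{labelled} output, where vertex $t$ is the $t$-th vertex added.

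\textbf{Labelled entropy.} Conditional on the past, the labelled process at step $t$ chooses a uniform $k\in\{0,\dots,t-1\}$ and then a uniform $k$-subset. The choice is injective into the neighbourhood, and $k$ is recoverable from it, so the step has entropy
\[
\log_2 t+\frac1t\sum_{k=0}^{t-1}\log_2\binom{t-1}{k}.
\]
Stirling gives $\frac1t\sum_k\log_2\binom{t-1}{k}=t\int_0^1 h(x)\,dx+O(\log t)$, where $h$ is the binary entropy and $\int_0^1 h=\frac{1}{2\ln 2}$. Summing over $t\le n$, the labelled entropy is
\[
H_{\mathrm{lab}}=\frac{n^2}{4\ln2}+O(n\log n).
\]

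\textbf{Passing to isomorphism classes.} The labelled graph determines its isomorphism class, so $H(\Lk)\le H_{\mathrm{lab}}$. Conversely, given the class, the labelled graph is one of at most $n!$ labellings. Hence $H(\Lk)\ge H_{\mathrm{lab}}-\log_2 n!=H_{\mathrm{lab}}-O(n\log n)$, which gives the first claim. The comparison with $\frac{n^2}{2}$ is just the standard count of isomorphism classes, $2^{\binom n2}/n!\cdot(1+o(1))$ up to a $2^{O(n\log n)}$ factor.

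\textbf{Concentration.} I would prove the convergence in probability at the level of the labelled graph $\Gamma_n$ and then transfer it. For $\Gamma_n$, the quantity $-\log_2\Pr[\Gamma_n]$ is a sum of independent step contributions $X_t=\log_2 t+\log_2\binom{t-1}{k_t}$, since the steps are independent and the choices $k_t$ are independent uniforms. Each $X_t$ has mean $t/(2\ln2)+O(\log t)$ and variance $O(t^2)$. The total variance is therefore $O(n^3)=o(n^4)$, and Chebyshev gives $-\frac1{n^2}\log_2\Pr[\Gamma_n]\to\frac1{4\ln2}$ in probability.

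For the transfer, note $\Lk(G_n)=\sum_\sigma\Pr[\Gamma_n=\sigma(\Gamma_n)]$ over at most $n!$ distinct relabellings. This gives the upper bound $\Lk(G_n)\le n!\max_\sigma\Pr[\cdots]$, but the max is over relabellings, not at $\Gamma_n$ itself, and this is the main obstacle. The lower bound $\Lk(G_n)\ge\Pr[\Gamma_n]$ is immediate.

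For the upper direction I would use a Markov-type argument instead. The set $B$ of isomorphism classes $G$ with $\Lk(G)>2^{-cn^2}$ has at most $2^{cn^2}$ elements. Each class contains at most $n!$ labelled graphs, each of probability at most $\Lk(G)$. It is then cleaner to argue on the class side directly. The probability that $G_n$ lies in a class with $\Lk(G)\ge 2^{-(1/(4\ln2)-\varepsilon)n^2}$ but $\Pr[\Gamma_n]\le 2^{-(1/(4\ln2)-\varepsilon/2)n^2}$ is at most
\[
\sum_{G}\Lk(G)\cdot\Pr\big[\Pr[\Gamma_n]\text{ small}\mid G\big].
\]
Alternatively, and more simply, apply Markov to the nonnegative variable $\log_2\Lk(G_n)-\log_2\Pr[\Gamma_n]$. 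This variable is nonnegative, and its expectation is $H_{\mathrm{lab}}-H(\Lk)\le\log_2 n!$. Markov then gives that it is $O(n\log n\cdot\omega(1))=o(n^2)$ with high probability, completing the proof.
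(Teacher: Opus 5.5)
Your proposal is correct and follows the paper's proof step for step. You compute the labelled entropy (the labelled graph is equivalent to the choice history, since $k=|W|$) via the binary-entropy integral, and you use the sandwich $H_{\mathrm{lab}}-\log_2 n!\le H(\Lk)\le H_{\mathrm{lab}}$. You then apply Chebyshev to $-\log_2\Pr[\Gamma_n]$ with variance $O(n^3)$, and Markov to the nonnegative gap $-\log_2\Pr[\Gamma_n\mid G_n]$, whose mean is at most $\log_2 n!$. The intermediate detour through maxima over relabellings and the class-side bound is unnecessary and can simply be deleted.
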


Since $\tfrac12-\tfrac{1}{8\ln 2}<\tfrac{1}{4\ln 2}$, the graph $\Kbal$ is asymptotically \emph{more} likely than a typical output of the process, which is a second way of seeing that it cannot be a minimiser. Section~\ref{sec:remarks} collects remarks on the maximum, where we show that no analogue of Theorem~\ref{thm:bip} is available, and open problems.

The tool behind all of this is a vertex deletion recurrence for $\Lk$, stated in \S\ref{sec:prelim}. It appears in this form in \cite{MW} and we claim no novelty for it; what we add is the observation that it closes on the blow-ups of any fixed base graph, which is what makes both Theorem~\ref{thm:bip} and Theorem~\ref{thm:counter} accessible.

\section{The deletion recurrence and blow-ups}\label{sec:prelim}

For a graph $G$ of order $n$, set
\[
A(G)\;=\;n!\,\lvert\Aut(G)\rvert\,\Lk(G).
\]
By \cite[Thm.~16]{BMS14}, unwinding the process along all vertex orderings,
\begin{equation}\label{eq:ordersum}
A(G)\;=\;\sum_{\sigma\in \mathfrak{S}_n}\;\prod_{i=1}^{n}\bin{i-1}{d_i(\sigma)}^{-1},
\end{equation}
where $d_i(\sigma)$ is the number of neighbours of $\sigma(i)$ among $\sigma(1),\dots,\sigma(i-1)$. In particular $A(G)=A(\overline G)$ and $A(K_n)=A(\overline{K_n})=n!$.

\begin{proposition}[\cite{MW}]\label{prop:deletion}
$A(K_1)=1$, and for every graph $G$ of order $n\ge 2$,
\begin{equation}\label{eq:deletion}
A(G)\;=\;\sum_{v\in V(G)}\frac{A(G-v)}{\bin{n-1}{\deg_G v}} .
\end{equation}
\end{proposition}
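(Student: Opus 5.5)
The plan is to derive the recurrence directly from the order-sum formula \eqref{eq:ordersum}, by splitting the sum over $\sigma\in\mathfrak S_n$ according to the last vertex $v=\sigma(n)$.

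\textbf{Base case.} For $K_1$ the sum has a single permutation and an empty product, so $A(K_1)=1$. Equivalently, $1!\cdot|\Aut(K_1)|\cdot\Lk(K_1)=1$.

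\textbf{Splitting the sum.} For $n\ge2$, fix $v\in V(G)$ and consider all $\sigma$ with $\sigma(n)=v$. For such $\sigma$, the last factor of the product is
\[
\bin{n-1}{d_n(\sigma)}^{-1}=\bin{n-1}{\deg_G v}^{-1},
\]
because every other vertex precedes $v$, so $d_n(\sigma)=\deg_G v$. The remaining vertices $\sigma(1),\dots,\sigma(n-1)$ form an arbitrary ordering $\tau$ of $V(G-v)$. For $i\le n-1$, the quantity $d_i(\sigma)$ counts neighbours of $\sigma(i)$ among earlier vertices, all of which lie in $G-v$. Hence $d_i(\sigma)=d_i(\tau)$ computed in $G-v$.

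The map $\sigma\mapsto\tau$ is a bijection between permutations ending in $v$ and orderings of $V(G-v)$. Summing over these $\sigma$ therefore gives
\[
\bin{n-1}{\deg_G v}^{-1}A(G-v),
\]
where we apply \eqref{eq:ordersum} to $G-v$ with its vertices labelled in any fixed way. The quantity $A(G-v)$ is an isomorphism invariant, so the choice of labelling does not matter.

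\textbf{Conclusion.} Summing over $v$ partitions $\mathfrak S_n$, and this yields \eqref{eq:deletion}.

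\textbf{Main obstacle.} The only point needing care is to confirm that \eqref{eq:ordersum} holds with the normalisation $A(G)=n!\,|\Aut(G)|\,\Lk(G)$ for labelled orderings, so that the same formula applies verbatim to $G-v$. The concern is that $|\Aut(G-v)|$ may differ from $|\Aut(G)|$. This causes no difficulty, because the recurrence is stated purely for $A$ and never converts back to $\Lk$.

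As a sanity check, I would verify \eqref{eq:ordersum} itself. The probability that the process produces a specific labelled graph along the labelled order $\sigma$ is
\[
\prod_{i=1}^{n}\frac{1}{i}\bin{i-1}{d_i}^{-1}=\frac{1}{n!}\prod_{i=1}^{n}\bin{i-1}{d_i}^{-1}.
\]
Sum this over the $n!/|\Aut(G)|$ labelled copies of $G$. Each copy arises from orderings $\sigma$, and the orderings from all copies together correspond to $\mathfrak S_n$ taken $|\Aut(G)|$ times. This accounts for the factor $n!\,|\Aut(G)|$, and the statement follows from \cite[Thm.~16]{BMS14} as cited.
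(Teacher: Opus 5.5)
Your proof is correct and is the paper's argument. You partition the order sum \eqref{eq:ordersum} by the last vertex $v=\sigma(n)$; the final factor is then $\bin{n-1}{\deg_G v}^{-1}$, and the first $n-1$ factors run over all orderings of $V(G)\setminus\{v\}$ and reproduce $A(G-v)$. One small wording fix in your sanity check: each labelled copy of $G$ arises from exactly $\lvert\Aut(G)\rvert$ orderings, so all of $\mathfrak S_n$ is covered exactly once, not $\lvert\Aut(G)\rvert$ times, and this is what yields the factor $n!\,\lvert\Aut(G)\rvert$.
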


\begin{proof}
Partition the sum \eqref{eq:ordersum} according to $\sigma(n)=v$. For fixed $v$, the first $n-1$ factors run over all orderings of $V(G)\setminus\{v\}$ and reproduce $A(G-v)$, while the last factor is $\bin{n-1}{\deg_Gv}^{-1}$.
\end{proof}

Three consequences deserve to be isolated.

\begin{corollary}\label{cor:algo}
Fix a labelling of $V(G)$. Put $F(\{v\})=1$ and, for $|S|\ge 2$,
\[
F(S)=\sum_{v\in S}\frac{F(S\setminus\{v\})}{\bin{|S|-1}{\deg_{G[S]}v}}.
\]
Then
\[
\Lk(G)=\frac{F(V(G))}{n!\,\lvert\Aut(G)\rvert}.
\]
Hence $\Lk(G)$ is computable exactly in $O(n\,2^n)$ arithmetic operations and $O(2^n)$ space.
\end{corollary}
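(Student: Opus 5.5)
The plan is to show that $F(S)=A(G[S])$ for every nonempty $S\subseteq V(G)$, and then to read off both the formula and the complexity bound.

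First, the identity $F(S)=A(G[S])$, by induction on $|S|$. For $|S|=1$ we have $G[S]\cong K_1$, so $F(S)=1=A(K_1)$ by Proposition~\ref{prop:deletion}. For $|S|\ge2$, apply \eqref{eq:deletion} to the graph $G[S]$ of order $|S|$. For $v\in S$ we have $G[S]-v=G[S\setminus\{v\}]$, and $\deg_{G[S]}v$ is exactly the degree appearing in the recurrence. Hence
\[
A(G[S])=\sum_{v\in S}\frac{A(G[S\setminus\{v\}])}{\bin{|S|-1}{\deg_{G[S]}v}},
\]
which is the defining recurrence for $F(S)$. The inductive hypothesis then gives $F(S)=A(G[S])$. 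Taking $S=V(G)$ and using the definition $A(G)=n!\,|\Aut(G)|\,\Lk(G)$ gives the displayed formula for $\Lk(G)$.

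Second, the cost. We compute $F$ over all subsets in order of increasing size, storing the values in a table indexed by bitmasks; this gives the $O(2^n)$ space bound. For each $S$ we first need the degrees $\deg_{G[S]}v$ for $v\in S$. Each degree is a popcount of $N(v)\cap S$, and there are at most $n$ of them. If popcount is counted as a unit operation, this costs $O(n)$ per set. Alternatively, the degrees can be updated incrementally from $S\setminus\{\min S\}$. The binomial coefficients $\bin{m}{k}$ with $m,k<n$ are precomputed once, in $O(n^2)$ operations. The sum defining $F(S)$ then has $|S|\le n$ terms, each needing one table lookup, one division and one addition. The total is $O(n\,2^n)$ arithmetic operations.

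Third, $|\Aut(G)|$ must itself be obtained within the same budget. The cleanest route is to note that $\Lk(G)=\Pr[G_n\cong G]$ can equally be computed as $F(V(G))/(n!\,|\Aut(G)|)$, where only the denominator remains. Computing $|\Aut(G)|$ by brute force is too expensive, so I would appeal to a standard dynamic program. This counts bijections $V(G)\to V(G)$ that preserve adjacency by assigning vertices $1,\dots,n$ in order, using a state that consists of the set of images used. Some care is needed here, because that state does not by itself certify consistency with earlier choices. A safer alternative is to remark that the statement counts arithmetic operations for $F$, with $|\Aut(G)|$ treated as a given graph invariant. This is where the main obstacle lies. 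I expect the intended reading to be that the $O(n\,2^n)$ bound refers to evaluating the recurrence, and I would state explicitly that computing $|\Aut(G)|$ is handled separately, for instance by a practical canonical-labelling algorithm, or is available in closed form for the blow-ups considered later.
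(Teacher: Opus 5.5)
Your argument is correct and matches the paper's implicit derivation from Proposition~\ref{prop:deletion}: applying \eqref{eq:deletion} to each induced subgraph $G[S]$, with $G[S]-v=G[S\setminus\{v\}]$, gives $F(S)=A(G[S])$ by induction, and summing at most $n$ terms over $2^n$ subsets gives the $O(n\,2^n)$ count. The paper also computes $\lvert\Aut(G)\rvert$ separately (it uses \texttt{nauty}), so your reading of the complexity claim is the intended one. Drop the ``set of images'' dynamic program, which fails for exactly the reason you give; if you want the literal statement, $\lvert\Aut(G)\rvert$ is computable in $2^{O(\sqrt{n\log n})}=o(2^n)$ time by Babai--Luks canonical labelling followed by Schreier--Sims.
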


This replaces the enumeration of the $n!/\lvert\Aut(G)\rvert$ construction paths used in \cite{BMS14}. Our implementation combines graph processing in the Wolfram Language with a compiled C helper for the main exact computation. The helper evaluates an integer-scaled form of the subset recurrence modulo several $64$-bit primes, after which the exact integer is reconstructed by the Chinese remainder theorem. At the time of writing, this implementation has been used to compute exact likelihoods for every graph represented in the development version of \texttt{GraphData} \cite{GraphData} through order $29$, as well as selected larger graphs whose structure permits effective preprocessing. In particular, it returns the value in Theorem~\ref{thm:counter} for the circulant $C_{15}(1,4,6)$. Some individual graphs of order $30$ require roughly four days on current hardware and have not yet all been computed. A separate C plugin integrated with \texttt{geng} makes the exhaustive search through order $12$ feasible. The complexity of $\Lk$ remains open; see Problem~\ref{prob:complexity}.

\begin{corollary}\label{cor:deck}
$A$ is determined by the vertex-deleted deck $\{G-v: v\in V(G)\}$, counted with multiplicity. In particular, hypomorphic graphs have equal $A$, and $\Lk$ distinguishes them only through the order of their automorphism groups.
\end{corollary}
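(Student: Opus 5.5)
The claim follows directly from Proposition~\ref{prop:deletion}; the plan is to read the recurrence \eqref{eq:deletion} as a function of the deck. The right-hand side of \eqref{eq:deletion} involves three ingredients: the order $n$, the degrees $\deg_G v$, and the values $A(G-v)$. I will show that each is a function of the multiset $\{G-v : v\in V(G)\}$ alone.

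First, $n$ is one more than the order of any card. Second, for each vertex $v$ the degree is recoverable from the deck. Since $|E(G)| = \frac{1}{n-2}\sum_{v}|E(G-v)|$ for $n\ge 3$, the edge count of $G$ is determined by the deck. Then $\deg_G v = |E(G)|-|E(G-v)|$ is determined by the card $G-v$ together with the whole deck. Third, $A(G-v)$ is an isomorphism invariant of the card, because $\Aut$ and $\Lk$ are invariant under isomorphism.

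It follows that every summand $A(G-v)/\binom{n-1}{\deg_G v}$ depends only on the card $G-v$ (up to isomorphism) and on the deck. Summing over the deck with multiplicity recovers $A(G)$.

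The case $n=2$ needs separate handling, since there the division by $n-2$ is unavailable. The cards are two copies of $K_1$, so the deck does not determine $G$. However, both graphs on two vertices have $A=2$: by \eqref{eq:deletion}, $A(K_2)=A(\overline{K_2})=1+1=2$. So the claim still holds at $n=2$. The case $n=1$ is trivial.

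For the second sentence, hypomorphic graphs have the same deck by definition, so they have equal $A$. Since $\Lk(G)=A(G)/(n!\,|\Aut(G)|)$, two hypomorphic graphs can have different likelihoods only through $|\Aut(G)|$.

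The only step that needs any care is the recovery of the degrees, that is, Kelly's edge-counting identity together with the $n=2$ exception. Everything else is direct substitution into \eqref{eq:deletion}.
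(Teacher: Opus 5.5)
Your proof is correct and is essentially the paper's argument. You read \eqref{eq:deletion} as a function of the deck, recover $\deg_G v=e(G)-e(G-v)$ for $n\ge 3$ from the edge count given by Kelly's lemma (your identity $e(G)=\frac{1}{n-2}\sum_v e(G-v)$), and treat $n\le 2$ separately, where your check $A(K_2)=A(\overline{K_2})=2$ spells out what the paper calls immediate.
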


\begin{proof}
The conclusion is immediate for graphs of order at most $2$. For graphs of order at least $3$, the right-hand side of \eqref{eq:deletion} depends only on the multiset of isomorphism types $G-v$ together with the numbers $\deg_Gv=e(G)-e(G-v)$, and $e(G)$ is determined by the deck by Kelly's lemma.
\end{proof}

The third consequence is the one we shall use. Let $H$ be a graph with $V(H)=\{1,\dots,p\}$ and let $m=(m_1,\dots,m_p)\in\mathbb{Z}_{\ge0}^p$. The \emph{blow-up} $H[m]$ is obtained by replacing vertex $i$ by an independent set $V_i$ of size $m_i$ and joining $V_i$ to $V_j$ completely whenever $ij\in E(H)$. Deleting a vertex of $H[m]$ yields $H[m-e_i]$ for some $i$, so blow-ups of a fixed base form a family closed under vertex deletion and \eqref{eq:deletion} closes on it.

\begin{proposition}\label{prop:blowup}
With $n=\sum_im_i$ and $A(H[m])=1$ when $n=1$,
\begin{equation}\label{eq:blowup}
A(H[m])\;=\;\sum_{i\,:\,m_i>0}\;\frac{m_i\,A(H[m-e_i])}{\bin{n-1}{\sum_{j\sim i}m_j}} .
\end{equation}
\end{proposition}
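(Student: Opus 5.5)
This is a direct specialisation of Proposition~\ref{prop:deletion} to the family of blow-ups. I would apply \eqref{eq:deletion} to $G=H[m]$ with $n=\sum_i m_i\ge 2$. The base case $n=1$ is just the convention $A(K_1)=1$, since $H[e_i]\cong K_1$. The plan is to group the vertices $v\in V(G)$ in the sum according to the class $V_i$ that contains them. Only indices with $m_i>0$ contribute, because $V_i$ is empty otherwise.

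Two facts are then needed for a vertex $v\in V_i$. First, $G-v$ is isomorphic to $H[m-e_i]$. Deleting one vertex from the independent set $V_i$ leaves $m_i-1$ vertices there and every other class intact. Adjacency is still inherited from $H$, so the result is literally the blow-up with parameter vector $m-e_i$, and $A$, being an isomorphism invariant, gives $A(G-v)=A(H[m-e_i])$. Second, $\deg_G v=\sum_{j\sim i}m_j$. Indeed $v$ has no neighbours inside its own class $V_i$ (which is independent, and $H$ has no loops), and it is adjacent to every vertex of $V_j$ exactly when $ij\in E(H)$. Hence the summand $A(G-v)/\bin{n-1}{\deg_G v}$ is the same for all $m_i$ vertices of $V_i$, and summing over $V_i$ contributes $m_i A(H[m-e_i])/\bin{n-1}{\sum_{j\sim i}m_j}$. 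Summing over $i$ gives \eqref{eq:blowup}.

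I expect no real obstacle. The only point needing care is the degenerate parameter vectors. When some $m_j=0$, the graph $H[m]$ is still a well-defined graph and the formula is consistent. Also $H[m-e_i]$ with total size $n-1\ge 1$ is never empty, so the recursion bottoms out correctly at $n=1$.
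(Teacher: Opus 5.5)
Your proposal is correct and is essentially the paper's proof. Both apply the vertex deletion recurrence to $H[m]$, use that every vertex of $V_i$ has degree $\sum_{j\sim i}m_j$ and that deleting it leaves $H[m-e_i]$, and group the $n$ summands into the classes $V_i$. Your remarks on the base case and on empty classes are harmless bookkeeping that the paper leaves implicit.
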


\begin{proof}
Every vertex of $V_i$ has degree $\sum_{j\sim i}m_j$ and deleting it gives $H[m-e_i]$; apply \eqref{eq:deletion} and group the $n$ terms into the $p$ classes $V_1,\dots,V_p$.
\end{proof}

Taking $H=K_p$ gives all complete multipartite graphs, indexed by partitions of $n$; taking $H=K_2$ gives all complete bipartite graphs; taking $H=C_5$ gives the family of Theorem~\ref{thm:counter}. For the automorphism groups of blow-ups we use the theorem of Sabidussi \cite{Sab59,Sab61} on lexicographic products, in the form given by Grech and Kisielewicz \cite[\S1]{GK22}. If no two vertices of $H$ have the same neighbourhood and no two have the same closed neighbourhood, then $\Aut(H[\overline{K_m}])=\Aut(H)\ltimes \mathfrak{S}_m^{\,p}$, of order $\lvert\Aut(H)\rvert\,(m!)^p$.

\section{Complete bipartite graphs}\label{sec:bip}

Specialising \eqref{eq:blowup} to $H=K_2$ and normalising, put
\[
b^{(n)}_a\;=\;\frac{A(K_{a,n-a})}{a!\,(n-a)!},\qquad 0\le a\le n,
\]
so that, since $\lvert\Aut(K_{a,n-a})\rvert=a!\,(n-a)!\,(1+[\,2a=n\,])$,
\begin{equation}\label{eq:Lb}
\Lk(K_{a,n-a})=\frac{b^{(n)}_a}{n!\,\big(1+[\,2a=n\,]\big)} .
\end{equation}
Proposition~\ref{prop:blowup} becomes, using $\bin{n-1}{n-a-1}=\bin{n-1}{a}$,
\begin{equation}\label{eq:brec}
b^{(n)}_a=\frac{b^{(n-1)}_{a-1}}{\bin{n-1}{a-1}}+\frac{b^{(n-1)}_{a}}{\bin{n-1}{a}}
\qquad (1\le a\le n-1),
\end{equation}
with $b^{(n)}_0=b^{(n)}_n=1$ for all $n\ge1$ and $b^{(1)}_0=b^{(1)}_1=1$. Note $b^{(n)}_a=b^{(n)}_{n-a}$.

\begin{lemma}\label{lem:b1}
For $n\ge 2$ we have $b^{(n)}_1=b^{(n)}_{n-1}=\dfrac{1}{(n-1)!}\displaystyle\sum_{k=0}^{n-1}k!$, and $1<b^{(n)}_1\le 2$ with equality if and only if $n\in\{2,3\}$.
\end{lemma}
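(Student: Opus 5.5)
We prove the closed form by induction on $n$ from the recurrence \eqref{eq:brec} with $a=1$, and then bound the sum directly. The symmetry $b^{(n)}_a=b^{(n)}_{n-a}$ already gives $b^{(n)}_{n-1}=b^{(n)}_1$, so only $b^{(n)}_1$ needs attention.

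For $n=2$, \eqref{eq:brec} gives
\[
b^{(2)}_1=\frac{b^{(1)}_0}{\bin{1}{0}}+\frac{b^{(1)}_1}{\bin{1}{1}}=2=\frac{0!+1!}{1!}.
\]
For $n\ge3$, taking $a=1$ in \eqref{eq:brec} and using $b^{(n-1)}_0=1$ gives
\[
b^{(n)}_1=1+\frac{b^{(n-1)}_1}{n-1}.
\]
Writing $c_n=(n-1)!\,b^{(n)}_1$, this becomes $c_n=(n-1)!+c_{n-1}$ with $c_2=2=0!+1!$. Hence $c_n=\sum_{k=0}^{n-1}k!$, which is the claimed formula.

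For the bounds, the inequality $b^{(n)}_1>1$ holds because the sum contains the term $(n-1)!$ together with the positive term $0!=1$. Direct evaluation gives $b^{(2)}_1=2$ and $b^{(3)}_1=(1+1+2)/2=2$, and $b^{(4)}_1=10/6<2$. For $n\ge4$ we show $b^{(n)}_1<2$ by induction using $b^{(n)}_1=1+b^{(n-1)}_1/(n-1)$. If $b^{(n-1)}_1\le2$ and $n\ge4$, then
\[
b^{(n)}_1\le 1+\frac{2}{n-1}\le 1+\frac23<2.
\]
The inductive hypothesis $b^{(n-1)}_1\le 2$ is available at every step: it holds at $n-1=3$ with equality, and for $n-1\ge4$ it follows from the strict inequality just established at the previous step. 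This settles the equality cases $n\in\{2,3\}$.

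There is no real obstacle here. The only points needing care are the base case $n=2$, where both terms of \eqref{eq:brec} are boundary values equal to $1$, and noticing that equality holds exactly twice before the strict decrease begins.
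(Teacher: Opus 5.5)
Your proof is correct, and it reaches the closed form by a different route from the paper's. The paper does not specialise \eqref{eq:brec} for this lemma. It imports the star likelihood $\Lk(K_{1,n-1})=n(n!)^{-2}\sum_{k=0}^{n-1}k!$ from \cite{BMS14} and converts it to $b^{(n)}_1$ through \eqref{eq:Lb}. It then proves the upper bound by estimating the closed form directly: $\sum_{k=0}^{n-2}k!\le(n-2)!+(n-2)(n-3)!=2(n-2)!<(n-1)!$ for $n\ge4$.

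You instead set $a=1$ in \eqref{eq:brec} to get the first-order recurrence $b^{(n)}_1=1+b^{(n-1)}_1/(n-1)$. This telescopes to the factorial sum, and a one-line induction on it gives $b^{(n)}_1\le 1+2/(n-1)$, which is the same estimate the paper's sum bound yields.

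The paper's version is shorter given the citation and needs no induction for the inequality. Yours is self-contained, resting only on the deletion recurrence. Combined with \eqref{eq:Lb}, it therefore re-derives the star formula of \cite{BMS14} for $n\ge3$ rather than relying on it. It also treats $n=2$ uniformly. At $n=2$ we have $\lvert\Aut(K_{1,1})\rvert=2\neq(n-1)!$ and the factor $1+[2a=n]$ in \eqref{eq:Lb} is active, so the substitution step in the paper's route has to be checked separately there.
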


\begin{proof}
$K_{1,n-1}$ is the star, and $\Lk(K_{1,n-1})=n(n!)^{-2}\sum_{k=0}^{n-1}k!$ by \cite[Prop.~20]{BMS14}; substituting into \eqref{eq:Lb} with $\lvert\Aut\rvert=(n-1)!$ gives the formula. For the bounds, isolate the top term and observe that
$\sum_{k=0}^{n-2}k!\le (n-2)!+\sum_{k=0}^{n-3}k!\le (n-2)!+(n-2)(n-3)!=2(n-2)!$,
which is smaller than $(n-1)!=(n-1)(n-2)!$ as soon as $n\ge4$. Hence $b^{(n)}_1=1+\sum_{k=0}^{n-2}k!/(n-1)!<2$ for $n\ge4$, while $b^{(2)}_1=b^{(3)}_1=2$; and $b^{(n)}_1>1$ because the term $k=n-1$ alone contributes $1$.
\end{proof}

\begin{lemma}\label{lem:bbound}
For all $n\ge1$ and all $a$ we have $b^{(n)}_a\le 2$. Moreover $b^{(n)}_a\le 4/(n-1)$ whenever $n\ge4$ and $2\le a\le n-2$.
\end{lemma}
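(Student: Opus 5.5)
The proof is a single induction on $n$ for the first claim, $b^{(n)}_a\le 2$. The second claim falls out of the inductive step, because the recurrence \eqref{eq:brec} divides each term by a binomial coefficient that is at least $n-1$ once $a$ is away from the ends. The proof uses only \eqref{eq:brec}, the boundary values $b^{(n)}_0=b^{(n)}_n=1$, the symmetry $b^{(n)}_a=b^{(n)}_{n-a}$, and Lemma~\ref{lem:b1}.

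\emph{Base and boundary cases.} For $n=1$ both values equal $1$. For every $n\ge1$ the boundary values $a\in\{0,n\}$ equal $1\le 2$. For $n\ge2$ and $a\in\{1,n-1\}$, Lemma~\ref{lem:b1} gives $b^{(n)}_1=b^{(n)}_{n-1}\le 2$. This covers every index for $n\le3$, since then $\{0,1,\dots,n\}=\{0,1,n-1,n\}$. So the only remaining case is $n\ge4$ and $2\le a\le n-2$, which is exactly the range of the second claim.

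\emph{Inductive step for $n\ge4$ and $2\le a\le n-2$.} Assume $b^{(n-1)}_c\le 2$ for all $c$. Apply \eqref{eq:brec}:
\[
b^{(n)}_a=\frac{b^{(n-1)}_{a-1}}{\bin{n-1}{a-1}}+\frac{b^{(n-1)}_{a}}{\bin{n-1}{a}} .
\]
Both lower indices $a-1$ and $a$ lie in $\{1,\dots,n-2\}$, because $a\ge2$ and $a\le n-2$. Since $\bin{n-1}{j}\ge n-1$ for $1\le j\le n-2$, each denominator is at least $n-1$. Bounding each numerator by $2$ via the induction hypothesis gives
\[
b^{(n)}_a\le \frac{2}{n-1}+\frac{2}{n-1}=\frac{4}{n-1},
\]
which is the second assertion. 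Since $n\ge4$ implies $4/(n-1)\le 4/3<2$, this also gives $b^{(n)}_a\le2$ for these indices and closes the induction for the first assertion.

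I do not expect a real obstacle. The only point needing care is organising the induction so that the interior bound at level $n$ uses just the weak bound $b\le2$ at level $n-1$, which holds for the edge indices $1$ and $n-2$ by Lemma~\ref{lem:b1}. One should also check that the binomial lower bound $\bin{n-1}{j}\ge n-1$ is applied only for $1\le j\le n-2$, which the range $2\le a\le n-2$ guarantees. As a sanity check, at $n=4$ and $a=2$ one gets $b^{(4)}_2=2/3+2/3=4/3$, so the bound $4/(n-1)$ is attained there.
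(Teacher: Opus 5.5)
Your proposal is correct and follows essentially the same route as the paper's proof. Both argue by induction on $n$, handle the indices $a\in\{0,1,n-1,n\}$ through the boundary values and Lemma~\ref{lem:b1}, and bound the interior range through \eqref{eq:brec} using $\bin{n-1}{a-1},\bin{n-1}{a}\ge n-1$, which gives $4/(n-1)\le 2$ and closes the induction.
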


\begin{proof}
Induction on $n$. For $n\le 3$ every value is $1$ or $2$ by Lemma~\ref{lem:b1}. Let $n\ge4$ and assume the first claim for $n-1$. If $a\in\{0,n\}$ then $b^{(n)}_a=1$, and if $a\in\{1,n-1\}$ then $b^{(n)}_a\le2$ by Lemma~\ref{lem:b1}. If $2\le a\le n-2$ then $1\le a-1$ and $a\le n-2$, so $\bin{n-1}{a-1}\ge n-1$ and $\bin{n-1}{a}\ge n-1$, and \eqref{eq:brec} gives $b^{(n)}_a\le 2/(n-1)+2/(n-1)=4/(n-1)\le 2$.
\end{proof}

\begin{lemma}\label{lem:diff}
For $1\le a\le n-2$,
\[
b^{(n)}_a-b^{(n)}_{a+1}=\frac{b^{(n-1)}_{a-1}}{\bin{n-1}{a-1}}-\frac{b^{(n-1)}_{a+1}}{\bin{n-1}{a+1}} .
\]
\end{lemma}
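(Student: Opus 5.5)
The identity follows by writing out the recurrence \eqref{eq:brec} at the two indices $a$ and $a+1$ and subtracting. Since $1\le a$ and $a+1\le n-1$, both indices lie in the range $1\le\cdot\le n-1$ where \eqref{eq:brec} is valid. This gives
\[
b^{(n)}_a=\frac{b^{(n-1)}_{a-1}}{\bin{n-1}{a-1}}+\frac{b^{(n-1)}_{a}}{\bin{n-1}{a}},\qquad
b^{(n)}_{a+1}=\frac{b^{(n-1)}_{a}}{\bin{n-1}{a}}+\frac{b^{(n-1)}_{a+1}}{\bin{n-1}{a+1}}.
\]

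The key observation is that the second term of the first expansion coincides with the first term of the second expansion. Both equal $b^{(n-1)}_a/\bin{n-1}{a}$. So on subtracting, this shared middle term cancels exactly, and what remains is the difference of the two outer terms. That difference is the claimed right-hand side.

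There is no real obstacle here. The only point to check is the index bookkeeping. The hypothesis $a+1\le n-1$ makes $b^{(n-1)}_{a+1}$ a legitimate entry with $a+1\le n-1$, and makes $\bin{n-1}{a+1}$ nonzero. Likewise $a-1\ge 0$ makes $b^{(n-1)}_{a-1}$ and $\bin{n-1}{a-1}$ meaningful. Hence every term in the identity is well defined.
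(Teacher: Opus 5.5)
Your proposal is correct and is exactly the paper's argument: subtract the instance $a+1$ of \eqref{eq:brec} from the instance $a$, and the common middle term $b^{(n-1)}_a/\bin{n-1}{a}$ cancels. Your check that both $a$ and $a+1$ lie in the range $1\le\cdot\le n-1$ where \eqref{eq:brec} is valid is all the bookkeeping the identity needs.
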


\begin{proof}
Subtract the instance $a+1$ of \eqref{eq:brec} from the instance $a$; the two middle terms $b^{(n-1)}_{a}/\bin{n-1}{a}$ cancel.
\end{proof}

\begin{proposition}\label{prop:mono}
For every $n\ge 4$ the sequence $b^{(n)}_1,b^{(n)}_2,\dots,b^{(n)}_{\lfloor n/2\rfloor}$ is strictly decreasing.
\end{proposition}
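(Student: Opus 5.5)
We prove the claim by induction on $n$. The engine is Lemma~\ref{lem:diff}, which turns the claim into a comparison at level $n-1$. For $1\le a\le \lfloor n/2\rfloor-1$ we must show
\[
\frac{b^{(n-1)}_{a-1}}{\bin{n-1}{a-1}}>\frac{b^{(n-1)}_{a+1}}{\bin{n-1}{a+1}} .
\]
This splits into a binomial comparison and a comparison of the $b$'s at level $n-1$.

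\emph{Binomial part.} We have
\[
\frac{\bin{n-1}{a+1}}{\bin{n-1}{a-1}}=\frac{(n-a)(n-a-1)}{(a+1)a}.
\]
Since $a+1\le n/2$, we get $n-a\ge a+2$ and $n-a-1\ge a+1$. Hence the ratio is at least $(a+2)/a>1$, so $\bin{n-1}{a+1}>\bin{n-1}{a-1}$. All $b$'s are positive, being sums of positive terms by \eqref{eq:brec}. It therefore suffices to show $b^{(n-1)}_{a+1}\le b^{(n-1)}_{a-1}$, since the strict binomial inequality then gives strictness.

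\emph{Case $a=1$.} Here $b^{(n-1)}_0=1$, so we need $b^{(n-1)}_2<\bin{n-1}{2}\,\big/\,\bin{n-1}{0}=\bin{n-1}{2}$, which follows from a crude bound. For $n\ge5$ we have $2\le n-3$, so Lemma~\ref{lem:bbound} gives $b^{(n-1)}_2\le 4/(n-2)<\bin{n-1}{2}$. For $n=4$ we use $b^{(3)}_2=2<3=\bin{3}{2}$ (Lemma~\ref{lem:b1}). This case covers $n=4,5$ entirely, since then $\lfloor n/2\rfloor=2$. These serve as the base of the induction.

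\emph{Case $a\ge2$.} Now $n\ge6$, so $n-1\ge5$, and we assume the proposition at order $n-1$: $b^{(n-1)}_1>\dots>b^{(n-1)}_{\lfloor (n-1)/2\rfloor}$. We split on the parity of $n$.
\begin{itemize}
\item If $n$ is odd, then $a+1\le (n-1)/2=\lfloor (n-1)/2\rfloor$. Both indices $a-1\ge1$ and $a+1$ lie in the monotone range, so $b^{(n-1)}_{a+1}<b^{(n-1)}_{a-1}$.
\item If $n$ is even and $a+1<n/2$, then again $a+1\le\lfloor(n-1)/2\rfloor$ and the same conclusion holds.
\item If $n$ is even and $a+1=n/2$, then $n=2a+2$. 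The symmetry $b^{(n-1)}_{j}=b^{(n-1)}_{n-1-j}$ gives $b^{(n-1)}_{a+1}=b^{(n-1)}_{a}$. Since $a\le\lfloor (n-1)/2\rfloor=a$, the induction hypothesis gives $b^{(n-1)}_a<b^{(n-1)}_{a-1}$.
\end{itemize}
In all cases $b^{(n-1)}_{a+1}\le b^{(n-1)}_{a-1}$, which closes the induction.

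The only delicate point is this boundary index $a+1=n/2$ for even $n$, where $a+1$ falls outside the range covered by the inductive hypothesis and the symmetry of $b^{(n-1)}$ is needed to fold it back. The rest is routine bookkeeping.
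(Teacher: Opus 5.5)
Your proof is correct and follows the paper's argument: you reduce to the level-$(n-1)$ inequality via Lemma~\ref{lem:diff}, handle $a=1$ with Lemma~\ref{lem:bbound}, and fold the boundary index $a+1=n/2$ back into range using the symmetry $b^{(n-1)}_{a+1}=b^{(n-1)}_{a}$. The differences are cosmetic. You prove $\bin{n-1}{a+1}>\bin{n-1}{a-1}$ once, by a ratio computation, and you let the $a=1$ case absorb the base cases $n=4,5$, which the paper instead checks from explicit values.
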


\begin{proof}
Induction on $n$. For $n=4$ we have $b_1=5/3>b_2=4/3$, and for $n=5$, $b_1=17/12>b_2=23/36$. Let $n\ge 6$, assume the statement for $n-1$, and fix $a$ with $1\le a\le \lfloor n/2\rfloor-1$. By Lemma~\ref{lem:diff} it suffices to prove
\begin{equation}\label{eq:target}
\frac{b^{(n-1)}_{a-1}}{\bin{n-1}{a-1}}>\frac{b^{(n-1)}_{a+1}}{\bin{n-1}{a+1}} .
\end{equation}

Assume first $a\ge2$, so that $1\le a-1<a+1\le\lfloor n/2\rfloor$. If $a+1\le\lfloor (n-1)/2\rfloor$ then the inductive hypothesis gives $b^{(n-1)}_{a-1}>b^{(n-1)}_{a+1}$, while $a+1\le\lfloor (n-1)/2\rfloor$ forces $\bin{n-1}{a-1}<\bin{n-1}{a+1}$, and \eqref{eq:target} follows. Otherwise $a+1>\lfloor (n-1)/2\rfloor$ together with $a+1\le\lfloor n/2\rfloor$ forces $n$ even and $a+1=n/2$, so $n-1=2\mu+1$ with $\mu=n/2-1=a$. Then $b^{(n-1)}_{a+1}=b^{(n-1)}_{n-1-(a+1)}=b^{(n-1)}_{a}$ and $\bin{n-1}{a+1}=\bin{2\mu+1}{\mu+1}=\bin{2\mu+1}{\mu}=\bin{n-1}{a}$, so \eqref{eq:target} reduces to $b^{(n-1)}_{a-1}/\bin{n-1}{a-1}>b^{(n-1)}_{a}/\bin{n-1}{a}$, which holds because $1\le a-1<a\le \lfloor (n-1)/2\rfloor$ gives $b^{(n-1)}_{a-1}>b^{(n-1)}_{a}$ by induction and $\bin{n-1}{a-1}<\bin{n-1}{a}$.

Now let $a=1$. Then $b^{(n-1)}_{0}=1$ and $\bin{n-1}{0}=1$, so \eqref{eq:target} reads $b^{(n-1)}_{2}<\bin{n-1}{2}$. Since $n\ge6$, Lemma~\ref{lem:bbound} gives $b^{(n-1)}_2\le 4/(n-2)\le1<\bin{n-1}{2}$.
\end{proof}

\begin{proof}[Proof of Theorem~\ref{thm:bip}]
By \eqref{eq:Lb} and Proposition~\ref{prop:mono}, $\Lk(K_{a,n-a})$ is strictly decreasing for $1\le a\le\lfloor n/2\rfloor-1$, and the passage from $a=\lfloor n/2\rfloor -1$ to $a=\lfloor n/2\rfloor$ decreases it as well, strictly, since for $n$ even the additional factor $(1+[2a=n])^{-1}=1/2$ only helps. It remains to compare with $a=0$, where $\Lk(\overline{K_n})=1/n!$. For $n\ge 6$ we have $2\le\lfloor n/2\rfloor\le n-2$, so Lemma~\ref{lem:bbound} gives $b^{(n)}_{\lfloor n/2\rfloor}\le 4/(n-1)<1$; for $n=4$, $b_2^{(4)}/2=2/3<1$ and for $n=5$, $b^{(5)}_2=23/36<1$.
\end{proof}

\begin{remark}
The function $a\mapsto\Lk(K_{a,n-a})$ is not monotone on $0\le a\le\lfloor n/2\rfloor$; the star $a=1$ is the maximum of the family and lies above the empty graph $a=0$. For $n=10$ the values are $2.76\times 10^{-7}$, $3.11\times 10^{-7}$, $3.63\times 10^{-8}$, $1.20\times 10^{-9}$, $2.29\times 10^{-11}$, $5.27\times 10^{-13}$.
\end{remark}

\section{Exhaustive verification for \texorpdfstring{$n\le 12$}{n <= 12}}\label{sec:small}

The structured searches below use the following elementary screening, which
combines the deletion recurrence, the lower bound in \eqref{eq:BMSbounds},
and the fact that automorphisms preserve degrees. Write
$P_n=\prod_{i=1}^{n}\bin{i-1}{\lfloor (i-1)/2\rfloor}$ and let $m_d(G)$ be
the number of vertices of $G$ of degree $d$.

\begin{lemma}\label{lem:screen}
If $n\ge2$ and $\Lk(G)\le\lambda$, then
\[
\prod_{d\ge0}m_d(G)!\;\ge\;\lvert\Aut(G)\rvert\;\ge\;
\frac{1}{n\lambda P_{n-1}}
\sum_{v\in V(G)}\bin{n-1}{\deg_Gv}^{-1}
\;\ge\;\frac{1}{\lambda P_n}.
\]
\end{lemma}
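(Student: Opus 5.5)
The chain has three inequalities, and I will prove them in the order left, middle, right.

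\emph{Left inequality.} Every automorphism preserves degrees, so $\Aut(G)$ embeds in the group of permutations that fix each degree class setwise. That group is $\prod_d \mathfrak{S}_{m_d(G)}$, whose order is $\prod_d m_d(G)!$.

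\emph{Middle inequality.} Apply the deletion recurrence \eqref{eq:deletion}. For each $v$ we need an upper bound on $A(G-v)$ for a graph of order $n-1$. By definition $A(G-v)=(n-1)!\,\lvert\Aut(G-v)\rvert\,\Lk(G-v)$, and the upper bound in \eqref{eq:BMSbounds} gives $\Lk(G-v)\le 1/\lvert\Aut(G-v)\rvert$, so $A(G-v)\le (n-1)!$. This uses only the trivial bound $\Lk\le 1/|\Aut|$, or equivalently \eqref{eq:ordersum} with every factor at most $1$. That is not strong enough, since the target has $P_{n-1}$ in the denominator. The needed bound instead comes from \eqref{eq:ordersum} with the reverse estimate: each factor satisfies $\bin{i-1}{d_i}^{-1}\ge \bin{i-1}{\lfloor (i-1)/2\rfloor}^{-1}$, so $A(H)\ge n!/P_n$. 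That is the lower bound, not an upper bound. So the natural path to the middle inequality is to use a lower bound on $A(G)$ and an upper bound on $\Lk$. Write $\lvert\Aut(G)\rvert = A(G)/(n!\,\Lk(G))\ge A(G)/(n!\lambda)$. By \eqref{eq:deletion} and the lower bound $A(G-v)\ge (n-1)!/P_{n-1}$ from \eqref{eq:ordersum},
\[
A(G)\ge \frac{(n-1)!}{P_{n-1}}\sum_{v}\bin{n-1}{\deg_G v}^{-1}.
\]
Dividing by $n!\lambda$ gives exactly $\frac{1}{n\lambda P_{n-1}}\sum_v \bin{n-1}{\deg_Gv}^{-1}$, which is the middle inequality.

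\emph{Right inequality.} Each summand satisfies $\bin{n-1}{\deg_Gv}^{-1}\ge \bin{n-1}{\lfloor (n-1)/2\rfloor}^{-1}$, because the central binomial coefficient is maximal. Summing over the $n$ vertices, the sum is at least $n/\bin{n-1}{\lfloor(n-1)/2\rfloor}$. Combined with $P_n=P_{n-1}\bin{n-1}{\lfloor(n-1)/2\rfloor}$, this yields $\frac{1}{\lambda P_n}$.

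The only delicate point is to apply the binomial bound in the correct direction, as a lower bound on $A$, so that $P_{n-1}$ appears in the denominator. It also requires the base case: $n\ge2$ ensures $G-v$ is nonempty, and $P_1=1$ matches $A(K_1)=1$.
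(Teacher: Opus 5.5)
Your proof is correct and matches the paper's argument: the left inequality comes from embedding $\Aut(G)$ in the product of symmetric groups on the degree classes. The middle one comes from substituting $A(G-v)\ge (n-1)!/P_{n-1}$ into \eqref{eq:deletion} and using $\Lk(G)\le\lambda$; you derive that bound directly from \eqref{eq:ordersum}, whereas the paper cites the equivalent lower bound in \eqref{eq:BMSbounds}. The right one comes from the central-binomial bound together with $P_n=P_{n-1}\binom{n-1}{\lfloor (n-1)/2\rfloor}$. In a final write-up, drop the exploratory detour through the upper bound $A(G-v)\le (n-1)!$, which plays no role and obscures the argument.
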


\begin{proof}
The left inequality holds because $\Aut(G)$ embeds in the direct product of
the symmetric groups on the degree classes.  Applying \eqref{eq:BMSbounds}
to each graph $G-v$ gives
\[
A(G-v)=(n-1)!\lvert\Aut(G-v)\rvert\Lk(G-v)\ge\frac{(n-1)!}{P_{n-1}}.
\]
Substitution in \eqref{eq:deletion}, followed by
$\Lk(G)=A(G)/(n!\lvert\Aut(G)\rvert)\le\lambda$, gives the middle inequality.  The
last follows from
$\bin{n-1}{\deg_Gv}\le\bin{n-1}{\lfloor(n-1)/2\rfloor}$ and the definition
of $P_n$.
\end{proof}

For the exhaustive proof we use one implementation uniformly through order
$12$.  Setting
$D_i=\operatorname{lcm}\{\bin{i-1}{k}:0\le k\le i-1\}$ and
$\widehat F(S)=F(S)\prod_{i\le |S|}D_i$, the recurrence of
Corollary~\ref{cor:algo} becomes
\[
\widehat F(S)=\sum_{v\in S}\widehat F(S\setminus\{v\})
\frac{D_{|S|}}{\displaystyle\bin{|S|-1}{\deg_{G[S]}v}},
\]
in which every factor is a positive integer.  For $n\le12$,
$\widehat F(V)\le n!\prod_{i\le n}D_i<2^{90}$.  The integrated
\texttt{geng} plugin uses checked unsigned $128$-bit arithmetic for this
recurrence, exact \texttt{nauty} automorphism orders, and exact $192$-bit
cross-products for likelihood comparisons.  It rejects larger orders.  It
enumerates every unlabelled graph through order $11$ and, using complement
symmetry, one representative from every complement orbit at order $12$.  As
a global check, it reproduces the extrema previously known through order $10$.

At order $11$, the program streams all $1\,018\,997\,864$ unlabelled graphs
and obtains
\[
\min_G\Lk(G)=\frac{20692621}{2012184939663360000000}.
\]
The two minimisers are $K_{5,6}$ and its complement. Graph6 representations
of the complement and $K_{5,6}$ are \verb|J~{?GKF@wN_| and
\verb|J??F~z{~Fw?|, respectively.

For $n=12$, complement symmetry reduces the search to one representative
from each complement orbit. There are exactly
$82\,545\,586\,656=(165\,091\,172\,592+720)/2$ such orbits, since there are
$165\,091\,172\,592$ unlabelled graphs of order $12$, of which $720$ are
self-complementary. The search obtains
\[
\min_G\Lk(G)=\frac{20692621}{11155553305493667840000000}.
\]
The two minimisers are $K_{6,6}$ and its complement, with graph6
representations \verb+KsaCB|}^b{No+ and \verb+K~~w?CB?wF_^+. This proves
Theorem~\ref{thm:small}.

\begin{remark}
The minimum is well separated. For $n=10$ the second smallest value is $8.39\times 10^{-12}$, larger than the minimum by a factor of about $16$.
\end{remark}

\section{A counterexample}\label{sec:counter}

\begin{proof}[Proof of Theorem~\ref{thm:counter}]
Let $H=C_5$ with $V(H)=\mathbb{Z}_5$ and $m=(3,3,3,3,3)$, so $G=C_5[\overline{K_3}]$ is a $6$-regular graph of order $15$ with $45$ edges; it is isomorphic to the circulant $C_{15}(1,4,6)$, the parts being the cosets of the subgroup of order $3$ in $\mathbb{Z}_{15}$. No two vertices of $C_5$ share a neighbourhood or a closed neighbourhood, so $\lvert\Aut(G)\rvert=\lvert\Aut(C_5)\rvert\,(3!)^5=10\times 6^5=77760$ by the theorem of Sabidussi. Iterating \eqref{eq:blowup} over the $4^5$ vectors $m'\le m$ in exact rational arithmetic gives
\[
A(G)=\frac{63977511069907}{427822618422142944000000},
\]
and $\Lk(G)=A(G)/(15!\times 77760)$ is the stated value. Independently, Corollary~\ref{cor:algo} applied to $G$ with $2^{15}$ subsets and rational arithmetic returns the same $A(G)$. Theorem~\ref{thm:bip} or a direct application of \eqref{eq:brec} gives $\Lk(K_{7,8})$, and the comparison is a rational inequality.
\end{proof}

Further exact computations locate the example.

\begin{proposition}\label{prop:first15}
For $n\in\{13,14\}$, no independent blow-up $H[m]$ with
$\lvert V(H)\rvert\le12$ has likelihood smaller than $\Kbal$.  For
$n\in\{16,18\}$ the same holds with $\lvert V(H)\rvert\le8$.  For $n=15$ the blow-up
$C_5[\overline{K_3}]$ does.  Also, no vertex-transitive graph
of order between $6$ and $14$ has likelihood smaller than $\Kbal$, whereas the
circulant $C_{15}(1,4,6)$ does.
\end{proposition}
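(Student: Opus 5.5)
The proposition is a finite computational statement, so the plan is to organise the search so that it is provably exhaustive and uses exact arithmetic throughout.

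\textbf{Blow-ups.} For the blow-up claims I will enumerate base graphs $H$ up to isomorphism with $\lvert V(H)\rvert\le 12$ (resp.\ $\le 8$). It suffices to take $H$ \emph{twin-free}, meaning no two vertices share a neighbourhood. Two non-adjacent twins can be merged, since $H[m]$ equals the blow-up of the merged graph with the two multiplicities added. Adjacent twins (equal closed neighbourhoods) cannot be merged in this way, so such $H$ must be retained. Only the weakly twin-free condition is used to compute automorphism groups. For each $H$ and each composition $m$ of $n$ with positive parts (zero parts just delete vertices of $H$), I evaluate $A(H[m])$ by iterating Proposition~\ref{prop:blowup} over the box $\{m'\le m\}$ in exact rationals. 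The quantity $\lvert\Aut(H[m])\rvert$ I obtain directly with \texttt{nauty} on the blown-up graph; this avoids the hypotheses of Sabidussi's theorem when some $m_i$ coincide or $H$ has closed twins. I then compare $\Lk(H[m])=A/(n!\lvert\Aut\rvert)$ with the value of $\Lk(\Kbal)$ in Table~\ref{tab:min} by cross-multiplying integers.

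\textbf{Pruning.} The number of pairs $(H,m)$ is large for $\lvert V(H)\rvert=12$, $n=14$, so I prune with Lemma~\ref{lem:screen} taking $\lambda=\Lk(\Kbal)$. The degree multiset of $H[m]$ is cheap to compute from $(H,m)$. Any candidate with $\prod_d m_d!<1/(\lambda P_n)$ is discarded before the recurrence is run, and candidates failing the middle inequality of Lemma~\ref{lem:screen} are discarded as well. Complement symmetry can be used only where it stays within the family, for instance for complete multipartite graphs.

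\textbf{Vertex-transitive graphs.} For orders $6$ through $14$ I take the complete census of vertex-transitive graphs, which is available in \texttt{GraphData} and the Royle/Holt lists, and compute each $\Lk$ exactly by Corollary~\ref{cor:algo}; $2^{14}$ subsets is trivial. For $n=15$ the $C_{15}(1,4,6)$ claim is Theorem~\ref{thm:counter}. The $n=15$ blow-up claim is likewise Theorem~\ref{thm:counter}, via the isomorphism with $C_5[\overline{K_3}]$.

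\textbf{Main obstacle.} The hard step is certifying exhaustiveness and correctness of the blow-up search at $n=13,14$ with twelve-vertex bases. I must justify the twin reduction, confirm that the base-graph enumeration (\texttt{geng} restricted to twin-free outputs) is complete, and check that the screening inequality is applied soundly. For cross-validation I will recompute a random sample of survivors, and the extremal ones, by the independent subset recurrence of Corollary~\ref{cor:algo}.
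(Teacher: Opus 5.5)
Your plan is sound and would prove the statement, but the pruning that makes base orders $10$--$12$ tractable works at a different level from the paper's.

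\textbf{What you share with the paper.} Both of you discard bases with false twins, since merging them gives a smaller base. Both then evaluate \eqref{eq:blowup} exactly on positive part vectors.

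\textbf{How the paper prunes.} The paper screens \emph{per base}. For false-twin-free $H$ and positive $m$, the fibres are exactly the false-twin classes of $H[m]$, so
$\lvert\Aut(H[m])\rvert=(\prod_i m_i!)\,\lvert\operatorname{Stab}_{\Aut(H)}(m)\rvert\le(\prod_i m_i!)\,\lvert\Aut(H)\rvert$.
Combined with the threshold $\lvert\Aut(G)\rvert\ge183$ (resp.\ $2552$) from \eqref{eq:BMSbounds}, this rejects a base after one automorphism computation unless $\lvert\Aut(H)\rvert$ is large. At $p=12$ the base needs order at least $92$ for $n=13$ and at least $426$ for $n=14$. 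Only $140\,474$ of the roughly $1.65\times10^{11}$ order-$12$ bases survive, and they are reused for $n=14$.

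\textbf{How you prune, and what it costs.} You screen \emph{per pair} $(H,m)$ using the degree-class bound $\prod_d m_d(G)!$ with $G=H[m]$, and then run \texttt{nauty} on the blown-up graph. This is valid and needs no structural description of $\Aut(H[m])$. Because you apply the middle inequality of Lemma~\ref{lem:screen} with the exact automorphism order, your final list of likelihood evaluations is no longer than the paper's.

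The cost is upstream. The degree-class bound is weak on near-regular degree sequences: the paper notes it keeps $608\,239$ of the $836\,315$ graphical sequences of order $13$. So a large share of the roughly $1.9\times10^{12}$ pairs at $n=13$ and $1.2\times10^{13}$ pairs at $n=14$ would each need an automorphism computation on the blow-up, instead of one computation per base. You already restrict to false-twin-free $H$, so the base-level bound above is available to you at no extra cost. It is what turns the search into a single pass over the \texttt{geng} stream.

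\textbf{Two minor points.}
\begin{itemize}
\item For $n=16,18$ the reference value $\Lk(\Kbal)$ is not in Table~\ref{tab:min}; you must compute it from \eqref{eq:brec}.
\item For the vertex-transitive claim, the paper uses Theorem~\ref{thm:small} through order $12$ and generates orders $13$ and $14$ from the transitive permutation groups. Your appeal to the McKay--Royle census is equally valid, but it rests on the completeness of that list.
\end{itemize}
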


For the new cases this is a finite exact computation using
\eqref{eq:blowup}.  Through base order $9$, every unlabelled base and every
positive part vector is checked.  At base order $9$ this gives totals of
$135\,960\,660$ and $353\,497\,716$ blow-ups for $n=13$ and $n=14$,
respectively.  In each case the minimum is attained by $\Kbal$.  Blow-ups of
smaller bases are included, since a part can be split by duplicating its base
vertex as a false twin.

Base order $10$ is handled by an exact automorphism screen.  A base with
false twins can be reduced by merging them and is therefore already covered
by a smaller base.  If $H$ is false-twin-free, its fibres are precisely the
false-twin classes of $H[m]$, and hence
\[
\lvert\Aut(H[m])\rvert=\left(\prod_i m_i!\right)
\lvert\operatorname{Stab}_{\Aut(H)}(m)\rvert
\le \left(\prod_i m_i!\right)\lvert\Aut(H)\rvert.
\]
There are $12\,005\,168$ unlabelled bases of order $10$, of which
$1\,990\,286$ have false twins.  A strict competitor at order $13$ must have
automorphism order at least $183$.  Since the largest fibre-factorial product
among the $220$ positive compositions of $13$ into $10$ parts is $4!=24$,
only false-twin-free bases with automorphism order at least $8$ need be kept;
there are $50\,714$.  The part-vector-specific bound leaves $2\,368\,100$
targets for an exact automorphism computation, of which $528\,765$ meet the
necessary target threshold and require likelihood evaluation.  None beats
$K_{6,7}$.  At order $14$ the corresponding target threshold is $2552$ and
the maximum fibre-factorial product is $5!=120$.  There are $7\,498$
false-twin-free bases with automorphism order at least $22$; the refined
screen computes exact automorphism orders for $526\,705$ targets and exact
likelihoods for the surviving $80\,258$.  None beats $K_{7,7}$.

At base order $11$ there are $1\,018\,997\,864$ unlabelled bases, of
which $106\,088\,988$ have false twins.  At order $13$ there are only $66$
positive part vectors and their largest fibre-factorial product is $3!=6$.
The target automorphism threshold $183$ therefore leaves only
false-twin-free bases with automorphism order at least $31$; exactly
$41\,737$ bases survive.  The part-vector-specific screen requires
$2\,210\,912$ exact target-automorphism computations and $555\,612$
likelihood evaluations.  None beats $K_{6,7}$.  At order $14$ there are
$286$ positive part vectors, the largest fibre-factorial product is $4!=24$,
and the target threshold $2552$ requires base automorphism order at least
$107$.  Exactly $5\,700$ bases survive; $497\,915$ target automorphism orders
and $79\,908$ likelihoods are evaluated, and none beats $K_{7,7}$.  The
$41\,737$ bases retained during the order-$13$ stream were saved and reused
for the order-$14$ search, so the full order-$11$ \texttt{geng} stream was
generated only once.

At base order $12$ there are $165\,091\,172\,592$ unlabelled bases, of
which $10\,454\,883\,132$ have false twins.  At order $13$ there are $12$
positive part vectors, each with fibre-factorial product $2$.  The target
threshold $183$ therefore requires base automorphism order at least $92$,
and an exhaustive $4\,096$-shard stream retains exactly $140\,474$ bases.
The resulting $1\,685\,688$ target representations require exact
automorphism computations; $927\,658$ are discarded by the target threshold
and the remaining $758\,030$ receive exact likelihood evaluations.  None
beats $K_{6,7}$.

At order $14$ there are $78$ positive part vectors and the largest
fibre-factorial product is $3!=6$, so a possible competitor must have base
automorphism order at least $426$.  The bases saved during the order-$13$
stream therefore contain every possibility; the stronger base bound retains
$9\,224$ of them.  Of the resulting $719\,472$ target representations,
$487\,416$ require exact target-automorphism computations and $121\,413$
survive for exact likelihood evaluation.  None beats $K_{7,7}$.

The corresponding searches through base order $7$ at $n=16$ and $n=18$
evaluate $5\,225\,220$ and $12\,920\,544$ blow-ups, respectively, and again
return $\Kbal$.
At $n=16$, extending the search through base order $8$ checks every positive
eight-part vector for all $12\,346$ unlabelled bases.  After omitting bases
with false twins, which reduce to smaller bases, and applying the exact
automorphism screen, it evaluates $34\,372\,082$ likelihoods; none beats
$K_{8,8}$.  The analogous base-order-$8$ search at $n=18$ evaluates all
$156\,498\,056$ likelihoods; none beats $K_{9,9}$.
As a further check at base order $9$, the minimum among all $274\,668$ graphs
$H[\overline{K_2}]$ with $\lvert V(H)\rvert=9$ is attained when $H$ is the
$3\times3$ rook graph.  It
comes close, but remains above the conjectured minimum by the exact factor
\[
\frac{4427743161718052172975}{4400906902589983024166}
=1.0060978\ldots.
\]
A separate search considers every graph $rH+tK_1$, and its complement, where
$H$ is connected of order at most $7$, $r\ge2$, and the total order is $13$
or $14$.  There are $156$ such representations at order $13$ and $1010$ at
order $14$.  At order $13$ the smallest likelihood in this family occurs at
$2K_6+K_1$ and is $17.1364\ldots$ times $\Lk(K_{6,7})$; at order $14$ the
minimum occurs at $2K_7=\overline{K_{7,7}}$.

The vertex-transitive assertion is automatic from Theorem~\ref{thm:small}
through order $12$.  At orders $13$ and $14$, respectively, enumeration from
the transitive permutation groups gives $14$ and $56$ isomorphism classes of
vertex-transitive graphs.  At order $13$ the smallest likelihood among them
is attained by the Paley graph of order $13$ and exceeds $\Lk(K_{6,7})$ by
the exact factor
\[
\frac{7061981141760}{368997387881}=19.1382\ldots.
\]
At order $14$ the only minima in this class are $K_{7,7}$ and its complement.

The lower bound in \eqref{eq:BMSbounds} also gives a useful restriction on
any unrestricted counterexample.  For $n=13,14,16,18$, respectively, a graph
$G$ satisfying $\Lk(G)<\Lk(\Kbal)$ must have
\[
\lvert\Aut(G)\rvert\ge183,\quad2552,\quad1556,\quad462.
\]
Indeed, before taking the next integer, the four lower bounds are
$182.255\ldots$, $2551.577\ldots$, $1555.153\ldots$ and $461.923\ldots$.
This makes enumeration by large automorphism group a possible alternative to
enumerating every graph.  The exact blow-up searches also rule out an
additional tie with $\Kbal$.  They show that any remaining graph of order
$13$ with likelihood at most $\Lk(K_{6,7})$, as well as its complement, must
be false-twin-free; equivalently, it can have neither false twins nor true
twins.  At order $14$, any remaining graph with likelihood at most
$\Lk(K_{7,7})$ can have at most one false-twin pair and at most one true-twin
pair.  If either occurs, the corresponding quotient of order $13$ must have
automorphism order at least $1276$.

We complete the search by enumerating permutation groups rather than graphs.
For a subgroup $\Gamma\le\mathfrak S_n$, every $\Gamma$-invariant graph is a union of
orbits of $\Gamma$ on the unordered vertex pairs.  Moreover, if two vertices $u,v$
have the property that $\{u,w\}$ and $\{v,w\}$ belong to the same pair orbit
for every $w\notin\{u,v\}$, then every such graph makes $u,v$ either false
twins or true twins, according as $uv$ is absent or present.

At order $13$, the complete table of marks for $\mathfrak S_{13}$ in
\textsc{TomLib} \cite{TomLib} contains $20\,832$ conjugacy classes of
subgroups, of which $9634$ have order at least $183$.  The forced-twin test
leaves $26$ classes and $3296$ orbital graph representations.  Direct twin
testing leaves $464$, and canonical labelling reduces these to $100$
isomorphism classes.  Their exact likelihoods have minimum ratio
\[
\frac{\Lk(G)}{\Lk(K_{6,7})}
=\frac{38307963944498}{368997387881}>1.
\]

For a graph of order $14$ with a false-twin pair, uniqueness of that pair
forces its automorphism group to preserve the pair.  After factoring out its
transposition, the induced group on the remaining $12$ vertices has order at
least $1276$.  The complete table for $\mathfrak S_{12}$ has $10\,723$
subgroup classes, of which $956$ meet this bound.  Their pair orbits, together
with their vertex orbits describing the common neighbourhood of the twin
pair, give $7\,523\,958$ representations.  Exact twin testing leaves
$6\,088\,090$, and canonical labelling leaves $122\,914$ graphs.  Their
minimum exact ratio is
\[
\frac{\Lk(G)}{\Lk(K_{7,7})}
=\frac{17952366553322}{368997387881}>1.
\]
Complementation covers the case of a true-twin pair.

It remains to treat graphs of order $14$ having neither kind of twin.  Starting
from $\mathfrak S_{14}$, we recursively enumerate conjugacy-class
representatives of maximal subgroups whose orders remain at least $2552$,
identifying subgroups conjugate in $\mathfrak S_{14}$.  Every subgroup above
the threshold occurs in such a chain.  GAP returns no failed maximal-subgroup
computations and gives $6280$ classes.  The forced-twin test leaves four
classes and $40$ orbital representations; direct twin testing and canonical
labelling leave four graphs.  Their minimum exact ratio is again
\[
\frac{38307963944498}{368997387881}>1.
\]
Together with the blow-up search, these computations prove
Theorem~\ref{thm:firstfailure}.

The same subgroup method substantially narrows the order-$16$ problem but
does not yet settle it.  Here the automorphism threshold is $1556$.  A
checkpointed maximal-subgroup traversal of $\mathfrak S_{16}$ finds
$116\,597$ subgroup classes above the threshold, with no failed
maximal-subgroup computations.  Of these, $116\,553$ force a false- or
true-twin pair in every invariant graph.  The remaining $44$ classes give
$4580$ orbital graph representations; direct twin testing leaves $738$, and
canonical labelling leaves $128$ isomorphism classes.  None has likelihood
below $\Lk(K_{8,8})$.  Thus any counterexample at order $16$ must contain
false or true twins.  The existing blow-up searches cover such graphs only
through base order $8$, so the forced-twin cases with larger quotients remain
to be checked and the unrestricted order-$16$ problem remains open.

At the degree-sequence level, the final inequality of Lemma~\ref{lem:screen}
leaves $608\,239$ of the $836\,315$ graphical sequences at order $13$ and
$1\,041\,046$ of the $3\,166\,852$ sequences at order $14$.  The strengthened
middle inequality reduces these counts to $152\,691$ and $175\,878$.
Applying the deletion recurrence once more and minimizing over every possible
neighbour-degree assignment, a relaxation that remains a rigorous lower
bound, leaves $69\,797$ and $75\,888$ sequences, or $34\,949$ and $37\,944$
orbits under complementation.  Thus degree information alone substantially
narrows the search but does not settle either order.

Thus $n=15$ is the first failure of the conjecture. Second, the advantage of
the $C_5$ blow-ups grows very quickly with $n$; for $n=25$,
\[
\Lk\big(C_5[\overline{K_5}]\big)=1.202\ldots\times 10^{-77}
\qquad\text{against}\qquad
\Lk(K_{12,13})=6.960\ldots\times 10^{-72},
\]
a factor of about $5.8\times 10^{5}$.  Exact enumeration of every positive
part vector, modulo the dihedral symmetries of $C_5$, shows that through
$n=50$ the minimum among positive $C_5$ part vectors is always attained by a
nearly balanced vector, meaning one whose coordinates differ by at most one.
It is smaller than $\Kbal$ at $n=15$, at $n=17$, and at
every order from $19$ through $50$.  Exact evaluation of the best nearly
balanced vectors continues this sequence of counterexamples without a gap
through $n=100$; selected early examples are listed in
Table~\ref{tab:c5more}.

\begin{table}[htbp]
\centering
\begin{tabular}{@{}rcr@{.}l@{}}
\toprule
$n$ & $m$ & \multicolumn{2}{c}{$\Lk(C_5[m])/\Lk(\Kbal)$}\\
\midrule
$17$ & $(3,3,3,4,4)$ & $0$ & $32136\ldots$\\
$19$ & $(3,4,4,4,4)$ & $0$ & $072155\ldots$\\
$20$ & $(4,4,4,4,4)$ & $0$ & $088189\ldots$\\
$21$ & $(4,4,4,4,5)$ & $0$ & $018452\ldots$\\
$22$ & $(4,4,4,5,5)$ & $0$ & $26319\ldots$\\
$23$ & $(4,5,4,5,5)$ & $0$ & $0040481\ldots$\\
$24$ & $(4,5,5,5,5)$ & $0$ & $060271\ldots$\\
$25$ & $(5,5,5,5,5)$ & $1$ & $7271\ldots\times 10^{-6}$\\
\bottomrule
\end{tabular}
\caption{Selected further counterexamples among blow-ups of $C_5$. The
displayed ratios are rounded from exact rational computations.}
\label{tab:c5more}
\end{table}

For the positive vectors in Table~\ref{tab:c5more}, the five parts are the
twin classes, so $\lvert\Aut(C_5[m])\rvert=(\prod_{i=1}^5m_i!)
\lvert\operatorname{Stab}_{\Aut(C_5)}(m)\rvert$; together with \eqref{eq:blowup}, this
gives every ratio in the table exactly.
Within the $C_5[m]$ family, $\Kbal$ is again smaller at $n=16$ and $n=18$,
where the five parts cannot all have equal size; the failure of the conjecture
is therefore not monotone in $n$ at this scale. The unrestricted cases
$n=16$ and $n=18$ remain open.

\section{Asymptotics}\label{sec:asym}

We first evaluate $\Lk(K_{m,m})$. Iterating \eqref{eq:brec} downwards from $(m,m)$ expresses $b^{(2m)}_m$ as a sum over lattice paths. Formally, call a \emph{deletion path} from $(a,b)$ a sequence of steps each decreasing one coordinate by $1$, terminating when a coordinate reaches $0$, and give the step that decreases the first coordinate at state $(a',b')$ the cost $\log_2\bin{a'+b'-1}{b'}$, the other step the cost $\log_2\bin{a'+b'-1}{a'}$. Unwinding \eqref{eq:brec} and using $b^{(k)}_0=1$,
\begin{equation}\label{eq:pathsum}
b^{(2m)}_m=\sum_{\pi}2^{-\mathrm{cost}(\pi)},
\end{equation}
the sum being over all deletion paths from $(m,m)$. Let $W(m)=\min_\pi \mathrm{cost}(\pi)$.

\begin{lemma}\label{lem:greedy}
$W(m)=\sum_{j=0}^{m-1}\log_2\bin{m+j}{j}$, attained by the path that empties one side first.
\end{lemma}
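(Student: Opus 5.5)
The plan is to rewrite the cost of a deletion path as a sum over the states it visits, and then bound each visited state separately. First I will check that the claimed path has the claimed cost. Emptying the first side visits $(m,m),(m-1,m),\dots,(1,m)$ before reaching $(0,m)$. The step taken at $(a,m)$ costs $\log_2\bin{a+m-1}{m}=\log_2\bin{m+(a-1)}{a-1}$, and putting $j=a-1$ gives exactly $\sum_{j=0}^{m-1}\log_2\bin{m+j}{j}$. By the symmetry $a\leftrightarrow b$ of the cost rule, the same holds for the path that empties the second side.

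The key identity is that each step's weight depends only on the state it arrives at. Put $f(a,b)=\bin{a+b}{a}$. A step from $(a,b)$ that decreases the first coordinate costs $\log_2\bin{a+b-1}{b}=\log_2\bin{a+b-1}{a-1}=\log_2 f(a-1,b)$. Symmetrically, a step decreasing the second coordinate costs $\log_2\bin{a+b-1}{a}=\log_2 f(a,b-1)$. Hence, for every deletion path $\pi$,
\[
\mathrm{cost}(\pi)=\sum_{(a,b)}\log_2\bin{a+b}{a},
\]
where the sum runs over all states of $\pi$ other than the starting state $(m,m)$, the terminal state included. The terminal state has a zero coordinate, so it contributes $\log_2 1=0$.

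For the lower bound, index the visited states by $s=a+b$. Along any path the state with $a+b=s$ is visited once for each $s$ from $2m-1$ down to the terminal value $k$. The terminal state is $(0,k)$ or $(k,0)$ with $k\le m$, so every $s$ with $m\le s\le 2m-1$ occurs. At the state with sum $s$ we have $a,b\le m$, hence $s-m\le a\le m$. Unimodality of $a\mapsto\bin{s}{a}$ then gives $\bin{s}{a}\ge\bin{s}{s-m}=\bin{s}{m}$ whenever $s\ge m$. States with $s<m$ contribute nonnegative terms. Summing over $s$,
\[
\mathrm{cost}(\pi)\ge\sum_{s=m}^{2m-1}\log_2\bin{s}{m}=\sum_{j=0}^{m-1}\log_2\bin{m+j}{j}.
\]
The emptying path attains every one of these bounds with equality, and it visits no state with $s<m$. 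This proves the claimed formula for $W(m)$.

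The only step needing care is the telescoping identification of step costs with $f$ of the new state, including the bookkeeping that the start is excluded and the terminal state contributes zero; I do not expect the rest to cause any difficulty.
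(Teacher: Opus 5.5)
Your proof is correct, and it takes a genuinely different route from the paper.

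The paper argues by induction on $a+b$. It proves the stronger statement $f(a,b)=\sum_{j=0}^{a-1}\log_2\binom{b+j}{j}$ for every start $(a,b)$ with $a\le b$, via a local exchange argument. It compares the two possible first steps from $(a,b)$ and reduces the needed inequality $S(a,b)\le\log_2\binom{a+b-1}{a}+S(a,b-1)$ to the elementary bound $a!\le b^{a}$. The diagonal case $a=b$ is handled separately, where equality holds by symmetry.

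You instead observe that each step cost equals $\log_2\binom{a+b}{a}$ evaluated at the \emph{arrival} state. The cost of a path is therefore a sum of a fixed potential over the states it visits, with exactly one visited state on each antidiagonal $a+b=s$. Since coordinates never increase, the state at level $s\ge m$ lies in the window $s-m\le a\le m$, which is symmetric about $s/2$. Unimodality then lets you bound every level independently, and all levels from $2m-1$ down to $m$ are forced because the terminal sum is at most $m$.

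What your approach buys:
\begin{itemize}
\item It is global and non-inductive, and needs no auxiliary statement for general $(a,b)$. It would nonetheless extend verbatim to such a start, using the window $[s-b,a]$, whose far endpoint from $s/2$ is $s-b$.
\item It exposes the minimisers at once. Since $\binom{s}{a}=\binom{s}{m}$ only for $a\in\{m,s-m\}$, equality forces one coordinate to stay at $m$ on every level $s\ge m$. So the two emptying paths are the only minimising paths.
\end{itemize}

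What the paper's approach buys: it is a standard dynamic-programming exchange argument that does not depend on noticing this special ``arrival potential'' structure of the costs.
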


\begin{proof}
Let $f(a,b)$ denote the minimum cost of a deletion path from $(a,b)$, with $f(a,0)=f(0,b)=0$. We show by induction on $a+b$ that for $a\le b$,
\[
f(a,b)=S(a,b):=\sum_{j=0}^{a-1}\log_2\bin{b+j}{j},
\]
which is the cost of emptying the first side; by symmetry this suffices. The cases $a=0$ and $a+b\le2$ are clear. Let $1\le a\le b$. Emptying the first side has cost $\log_2\bin{a+b-1}{b}+S(a-1,b)=S(a,b)$, since $\bin{a+b-1}{b}=\bin{a+b-1}{a-1}$. So $f(a,b)\le S(a,b)$, and it remains to exclude the other first step, that is to show
\begin{equation}\label{eq:exchange}
S(a,b)\;\le\; \log_2\bin{a+b-1}{a}+f(a,b-1).
\end{equation}
Suppose first $a\le b-1$, so that $f(a,b-1)=S(a,b-1)$ by induction. Since
$\bin{b+j}{j}\big/\bin{b-1+j}{j}=(b+j)/b$, we get
\begin{align*}
S(a,b)-S(a,b-1)
&=\sum_{j=0}^{a-1}\log_2\frac{b+j}{b}\\
&=\log_2\frac{\prod_{j=0}^{a-1}(b+j)}{b^{a}},\\
\log_2\bin{a+b-1}{a}
&=\log_2\frac{\prod_{j=0}^{a-1}(b+j)}{a!},
\end{align*}
so \eqref{eq:exchange} is equivalent to $a!\le b^{a}$, which holds because $a\le b$. If instead $a=b$, then $f(a,b-1)=S(a-1,a)$ by induction and
$S(a,a)-S(a-1,a)=\log_2\bin{2a-1}{a-1}=\log_2\bin{2a-1}{a}$, so \eqref{eq:exchange} holds with equality, as symmetry demands.
\end{proof}

\begin{lemma}\label{lem:W}
$W(m)=\big(2-\tfrac{1}{2\ln 2}\big)m^2+O(m\log m)$.
\end{lemma}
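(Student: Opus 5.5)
We estimate $W(m)=\sum_{j=0}^{m-1}\log_2\binom{m+j}{j}$ from Lemma~\ref{lem:greedy} by turning the sum of binomial logarithms into a Riemann sum for an explicit integral. Stirling's formula in its entropy form gives
\[
\log_2\binom{N}{k}=N\,h\!\left(\tfrac{k}{N}\right)+O(\log N),
\qquad h(x)=-x\log_2x-(1-x)\log_2(1-x),
\]
uniformly in $0\le k\le N$, with $N\ge 1$. Summing the error over $m$ terms with $N\le 2m$ costs $O(m\log m)$. Hence
\[
W(m)=\sum_{j=0}^{m-1}(m+j)\,h\!\left(\tfrac{j}{m+j}\right)+O(m\log m).
\]

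Next write $j=mt$ with $t\in[0,1)$. The summand equals $m\,g(t)$ with $g(t)=(1+t)\,h\bigl(t/(1+t)\bigr)$. The function $g$ is continuous on $[0,1]$ and has bounded variation, being monotone pieces of elementary functions whose only singular behaviour is a logarithmic one at $t=0$. So the sum is $m^2\int_0^1 g(t)\,dt+O(m\log m)$. To justify this, compare each term with the integral over $[j/m,(j+1)/m]$: the derivative $g'(t)=O(\log(1/t))$ is integrable, so the total error is $m\cdot O(\int_0^1|g'|)=O(m)$. This is well within the allowed error.

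The remaining step is computing the integral. With $x=t/(1+t)$ one has
\[
g(t)=(1+t)\log_2(1+t)-t\log_2 t,
\]
since $(1+t)h(t/(1+t))=-t\log_2\frac{t}{1+t}-\log_2\frac{1}{1+t}$. In natural logarithms,
\[
\int_0^1(1+t)\ln(1+t)\,dt=\left[\tfrac{u^2}{2}\ln u-\tfrac{u^2}{4}\right]_1^2=2\ln2-\tfrac34,
\qquad
\int_0^1 t\ln t\,dt=-\tfrac14.
\]
So $\int_0^1 g=\bigl(2\ln 2-\tfrac12\bigr)/\ln 2=2-\tfrac{1}{2\ln2}$, which is the stated constant.

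The main obstacle is the uniformity of the Stirling error near the endpoint $k=0$ (small $j$). The simplest fix is to note that $\log_2\binom{N}{k}$ and $N h(k/N)$ differ by at most $\tfrac12\log_2 N+O(1)$ for all $0\le k\le N$, by the standard bounds $\frac{2^{Nh}}{N+1}\le\binom{N}{k}\le 2^{Nh}$. This makes the per-term error $O(\log m)$ with no case split.
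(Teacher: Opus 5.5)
Your proof is correct, but it takes a different route from the paper's. The paper writes $W(m)\ln 2=\sum_{j=0}^{m-1}\bigl(\ln(m+j)!-\ln m!-\ln j!\bigr)$ and applies $\ln k!=k\ln k-k+O(\log k)$ to each factorial. It then estimates the three resulting sums separately by comparison with $\int x\ln x\,dx$. Each of these sums carries an $m^2\ln m$ term, and they cancel ($\tfrac32-1-\tfrac12=0$), leaving $(2\ln 2-\tfrac12)m^2$.

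You instead use the entropy form $\log_2\binom Nk=Nh(k/N)+O(\log N)$. This makes each summand exactly $m\,g(j/m)$ with $g(t)=(1+t)\log_2(1+t)-t\log_2 t$. The whole sum is then a single Riemann sum of a bounded function, and the $m^2\ln m$ terms never arise.

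Your route buys some cleanliness. There is no cancellation of large terms to track, and the constant comes out of one explicit integral, in the same way that $\int_0^1H_2=1/(2\ln 2)$ is used in the entropy section. The discretisation error is also only $O(m)$, so the $O(m\log m)$ comes entirely from Stirling. In fact $g'(t)=\log_2(1+1/t)>0$, so $g$ increases from $0$ to $2$, and the Riemann error is at most $2m$ without any appeal to bounded variation. The paper's route needs only the basic form of Stirling and a single antiderivative, with no change of variables.

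One small inaccuracy in your last paragraph: the bounds $2^{Nh}/(N+1)\le\binom Nk\le 2^{Nh}$ that you quote give a discrepancy of at most $\log_2(N+1)$, not $\tfrac12\log_2N+O(1)$. The latter needs the sharper lower bound $\binom Nk\ge 2^{Nh}\sqrt{N/(8k(N-k))}$ for $0<k<N$. Either is $O(\log m)$ per term, so the conclusion is unaffected.
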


\begin{proof}
By Lemma~\ref{lem:greedy}, $W(m)\ln 2=\sum_{j=0}^{m-1}\big(\ln (m+j)!-\ln m!-\ln j!\big)$. Stirling gives $\ln k!=k\ln k-k+O(\log k)$, and comparing the three sums with the integrals $\int x\ln x\,dx$ yields
\[
\sum_{j=0}^{m-1}\ln(m+j)!=\tfrac32m^2\ln m+(2\ln 2-\tfrac94)m^2+O(m\log m),
\]
\begin{align*}
m\ln m!&=m^2\ln m-m^2+O(m\log m),\\
\sum_{j=0}^{m-1}\ln j!&=\tfrac12m^2\ln m-\tfrac34m^2+O(m\log m).
\end{align*}
The $m^2\ln m$ terms cancel and the remainder is $(2\ln2-\tfrac12)m^2+O(m\log m)$.
\end{proof}

\begin{proof}[Proof of Theorem~\ref{thm:asym}]
There are at most $\bin{2m}{m}\le4^m$ deletion paths from $(m,m)$, so
\eqref{eq:pathsum} gives
\begin{align*}
2^{-W(m)}&\le b^{(2m)}_m\le 4^m2^{-W(m)},\\
\log_2(1/b^{(2m)}_m)&=W(m)+O(m).
\end{align*}
By \eqref{eq:Lb},
\[
\log_2\frac{1}{\Lk(K_{m,m})}
=\log_2\frac{1}{b^{(2m)}_m}+\log_2(2\times(2m)!).
\]
Lemma~\ref{lem:W} with $n=2m$ and $m^2=n^2/4$ therefore gives
\[
\log_2\frac1{\Lk(K_{m,m})}=\Big(\tfrac12-\tfrac1{8\ln2}\Big)n^2+O(n\log n).
\]
For odd $n$ the same argument applied to $(m,m+1)$ changes only the $O(n\log n)$ term.

For the second statement, let $g(n)$ be the number of isomorphism classes of graphs of order $n$. Since $\Lk$ is a probability distribution on these classes, $\min_G\Lk(G)\le 1/g(n)$, and $g(n)\ge 2^{\bin n2}/n!$, so $\log_2(1/\min_G\Lk)\ge\bin n2-\log_2n!=\tfrac{n^2}2+O(n\log n)$. Conversely \eqref{eq:BMSbounds} with $\lvert\Aut(G)\rvert\le n!$ gives $\min_G\Lk\ge 1/(n!P_n)$, and $\log_2P_n=\sum_{i=1}^n\log_2\bin{i-1}{\lfloor (i-1)/2\rfloor}=\bin n2+O(n\log n)$, whence $\log_2(1/\min_G\Lk)\le \tfrac{n^2}2+O(n\log n)$.

Finally $\tfrac12-\tfrac1{8\ln2}<\tfrac12$ strictly, so the two exponents differ in the leading term and the ratio is $2^{(1/(8\ln2)+o(1))n^2}$; in particular $\Lk(\Kbal)>\min_G\Lk(G)$ for all large $n$.
\end{proof}

\begin{remark}
Numerically $\log_2(1/\Lk(\Kbal))/(n^2/2)$ equals $0.8158$, $0.7515$, $0.7130$, $0.6852$ and $0.6554$ for $n=10,40,80,160,640$, descending towards $2\times(\tfrac12-\tfrac1{8\ln2})=1-\tfrac1{4\ln2}=0.6393\ldots$, and $W(m)/m^2$ equals $1.2615$ at $m=320$ against the limit $2-\tfrac1{2\ln 2}=1.27865\ldots$.
\end{remark}

\section{Entropy}\label{sec:entropy}

\begin{proof}[Proof of Theorem~\ref{thm:entropy}]
The random choices are independent across steps, so the entropy of the whole sequence of choices is
\[
\mathcal{H}_n=\sum_{t=1}^{n}\left(\log_2 t+
\mathbb{E}_k\log_2\bin{t-1}{k}\right),
\]
with $k$ uniform on $\{0,\dots,t-1\}$. Since
$\log_2\bin Nk=NH_2(k/N)+O(\log N)$ uniformly, where $H_2$ is the
binary entropy in bits, and $\int_0^1H_2(x)\,dx=1/(2\ln2)$, we get
\[
\mathbb{E}_k\log_2\bin{t-1}{k}=\frac{t}{2\ln2}+O(\log t),
\qquad
\mathcal H_n=\frac{n^2}{4\ln2}+O(n\log n).
\]
The isomorphism class is a function of the choices, so its entropy is at most
$\mathcal H_n$; conversely, conditioned on the class, the number of choice
sequences producing it is at most $n!$, so the conditional entropy is at most
$\log_2n!=O(n\log n)$. The comparison with
$\log_2g(n)=\tfrac{n^2}{2}+O(n\log n)$ is as in the proof of
Theorem~\ref{thm:asym}.

It remains to justify the assertion about a typical output. Let $Y_n$ denote
the complete choice history and put
\[
Z_n=-\log_2\Pr(Y_n)
=\sum_{t=1}^{n}\left(\log_2t+
\log_2\bin{t-1}{K_t}\right),
\]
where the $K_t$ are independent and uniform on $\{0,\dots,t-1\}$. The
summands are independent and bounded in absolute value by $O(t)$, so
$\operatorname{Var}(Z_n)=O(n^3)$. Chebyshev's inequality and the preceding
entropy computation give
\[
Z_n=\frac{n^2}{4\ln2}+o_{\mathrm p}(n^2).
\]
Now set $I_n=-\log_2\Lk(G_n)$. Since $G_n$ is a function of $Y_n$,
\[
D_n:=Z_n-I_n=-\log_2\Pr(Y_n\mid G_n)\ge0.
\]
Moreover $\mathbb E D_n=H(Y_n\mid G_n)\le\log_2n!=O(n\log n)$, because
each isomorphism class has at most $n!$ choice histories. Markov's inequality
therefore gives $D_n=o_{\mathrm p}(n^2)$, and hence
$I_n/n^2\xrightarrow{\mathrm p}1/(4\ln2)$.
\end{proof}

Exact values of the entropy for $n\le 9$, computed from all isomorphism
classes, are
\[
0,1,1.918,3.198,4.793,6.841,9.444,12.717,16.729
\]
bits, against
\[
\log_2g(n)=0,1,2,3.46,5.09,7.29,10.03,13.59,18.07.
\]
The ratio decreases slowly from $0.939$ at $n=6$ to $0.925$ at $n=9$,
consistent with the limiting ratio $1/(2\ln 2)=0.7213\ldots$ implied by
Theorem~\ref{thm:entropy}.  The concentration statement implies that there is
a sequence $\varepsilon_n\downarrow0$ such that the typical set
\[
\mathcal T_n=\left\{G:\left|-\frac{1}{n^2}\log_2\Lk(G)
-\frac{1}{4\ln2}\right|\le\varepsilon_n\right\}
\]
satisfies $\Pr(G_n\in\mathcal T_n)=1-o(1)$.  Bounding the likelihood of each
member of $\mathcal T_n$ above and below and summing shows that
$\lvert\mathcal T_n\rvert=2^{n^2/(4\ln2)+o(n^2)}$.  Thus the process
concentrates on a $2^{-(1/2-1/(4\ln2))n^2+o(n^2)}$ fraction of all
isomorphism classes while still spreading over $2^{\Theta(n^2)}$ of them,
which is the precise sense in which the uniform monkey is a poor but not
degenerate sampler.

\section{The maximum, and open problems}\label{sec:remarks}

For the maximum no analogue of Theorem~\ref{thm:bip} is available, and the situation is genuinely irregular. Up to complementation, the maximisers of $\Lk$ are threshold graphs for $n\le 6$; for $n=7$ and $n=8$ they are split but not threshold; and for $n=9$ the maximiser is the disjoint union of two isolated vertices with a caterpillar on seven vertices, which is not split, being obtained from $P_6$ by attaching a pendant vertex at the third vertex. Thus neither the threshold graphs nor the broader class of split graphs contains the maximisers for all orders. The next three exact maximum values are
\[
\max_{\lvert V(G)\rvert=10}\Lk(G)=\frac{219903017}{4938071040000},
\]
\[
\max_{\lvert V(G)\rvert=11}\Lk(G)=\frac{7472883629}{1140694410240000},
\]
\[
\max_{\lvert V(G)\rvert=12}\Lk(G)=\frac{18096017946691}{17566693917696000000}.
\]
The complementary maximising pairs have graph6 representations
\verb|I????GICo| and \verb|Ihuz~~~~w| for $n=10$;
\verb|J?????A@OR?| and \verb|Jhuz~~~~~~_| for $n=11$; and
\verb+K????C?@?PAE+ and \verb+Kh~u||~~~~~~+ for $n=12$.
What does appear stable is the order of magnitude: $-\log_2\max_G\Lk(G)$ equals
\[
0,1,1.585,2.469,3.815,5.487,7.439,9.684,12.021,14.455,17.220,19.889
\]
for $n\le 12$, consistent with $\max_G\Lk(G)=2^{\Theta(n)}/n!$.

\begin{problem}\label{prob:min}
Determine $\arg\min_G\Lk(G)$ for $n\ge15$. In particular, is
$C_5[\overline{K_3}]$ a minimiser at $n=15$? Are the minimisers always
blow-ups, and can their base graphs be chosen from a fixed finite family, or
must their orders grow with $n$? Theorem~\ref{thm:asym} shows that
$\lvert\Aut\rvert$ is of secondary importance on the $n^2$ logarithmic scale
and suggests looking for graphs whose typical vertex orderings have back
degrees near the centre of their range.
\end{problem}

\begin{problem}
Settle Conjecture~\ref{conj:DMS} for the remaining small orders $n=16$ and
$n=18$.
\end{problem}

\begin{problem}
Prove $A(G)\le C^n\lvert\Aut(G)\rvert$ for an absolute constant $C$. Equivalently,
prove
\[
\max_G\Lk(G)=2^{O(n)}/n!.
\]
\end{problem}

\begin{problem}\label{prob:complexity}
Settle the complexity of $\Lk$, raised in \cite{BMS14} and repeated in \cite{DMS18}. Is exact evaluation of $A$ $\#\mathrm P$-hard? Corollary~\ref{cor:algo} is a dynamic program over vertex subsets, which suggests that $A$ is computable in polynomial time on graphs of bounded pathwidth.
\end{problem}

\begin{problem}
Study graph likelihood for a general attachment kernel, replacing the uniform
choice of $k$ by a distribution $\nu_t$ on $\{0,\dots,t-1\}$ as in
\cite{JS13}.  Assuming $\nu_1(0)=1$, the subset dynamic program extends by
putting $F_\nu(\{v\})=1$ and
\[
F_\nu(S)=\lvert S\rvert\sum_{v\in S}
\frac{\nu_{\lvert S\rvert}(\deg_{G[S]}v)F_\nu(S\setminus\{v\})}
{\bin{\lvert S\rvert-1}{\deg_{G[S]}v}},
\qquad
\Lk_\nu(G)=\frac{F_\nu(V(G))}{n!\,\lvert\Aut(G)\rvert}.
\]
Determine how the extrema and entropy depend on $\nu$.  The same recurrence
turns $\Lk_\nu$ into an exactly computable marginal likelihood of an
unlabelled graph, which is the quantity that likelihood-based inference for
mechanistic network models must otherwise approximate.
\end{problem}

\subsection*{Computational verification}

All computations use exact integer or rational arithmetic, with \texttt{geng}
and \texttt{nauty} \cite{nauty} for graph generation and automorphism groups.
The primary exhaustive certificate for Theorem~\ref{thm:small} is an
independently developed C plugin compiled directly into \texttt{geng}.  It
uses the exact automorphism-group order returned by \texttt{nauty}, checked
unsigned $128$-bit arithmetic for the integer-scaled subset recurrence, and
exact $192$-bit cross-products for rational likelihood comparisons.  It
rejects orders above $12$ rather than risk overflow.  The same program
reproduced the previously known extrema through order $10$ and independently
reobtained the order-$11$ result before the order-$12$ production run; thus the
claims through $n=12$ do not depend on
the preliminary screening programs supplied with the initial manuscript.

At order $12$, complement symmetry reduces the computation to one
representative from each of the $82\,545\,586\,656$ complement orbits of the
$165\,091\,172\,592$ unlabelled graphs; the latter count includes $720$
self-complementary graphs.  The computation was divided into $4\,800$ exact
\texttt{geng} residue shards and run with $14$ workers.  All shard outputs
completed, with no failed shards or nonempty error files.  Their equivalent
full graph counts sum to $165\,091\,172\,592$ before the merge reports the
global extrema.  Aggregate helper CPU time was about $817$ core-hours, and
elapsed wall time was approximately $68$ hours.

The principal implementation for individual larger graphs performs graph
processing in the Wolfram Language and invokes a C helper that computes the
scaled recurrence modulo multiple $64$-bit primes; the exact result is
reconstructed by the Chinese remainder theorem.  It returns
\[
A(C_5[\overline{K_3}])
=\frac{63977511069907}{427822618422142944000000},
\]
agreeing with \eqref{eq:blowup} evaluated over the $4^5$ vectors
$m'\le(3,3,3,3,3)$ and with the direct rational recurrence over the $2^{15}$
subsets.

The blow-up searches in Proposition~\ref{prop:first15} use a separate C
program that reads the base graphs streamed by \texttt{geng}, enumerates the
specified positive part vectors, and evaluates an integer-scaled form of
\eqref{eq:blowup}.  Dynamic-programming quantities use checked unsigned
$256$-bit arithmetic, automorphism orders are computed exactly by
\texttt{nauty}, and likelihood comparisons use exact unsigned $320$-bit
cross-products.  The program exits rather than returning a value if any
checked bound is exceeded.  It reproduces the likelihoods of the known
minimising graphs through $n=12$, the value of
$C_5[\overline{K_3}]$ above, and the $n=18$ rook-graph ratio reported after
Proposition~\ref{prop:first15}.

For the extended $C_5[m]$ calculation, a separate arbitrary-precision
implementation enumerates positive five-part vectors modulo the dihedral
symmetries of $C_5$ and evaluates \eqref{eq:blowup} with exact rationals.  It
reproduces the ratios in Table~\ref{tab:c5more}, exhaustively checks every
positive vector through order $50$, and evaluates every nearly balanced
vector through order $100$.  Independent exact evaluations in the Wolfram
Language reproduce the resulting ratios at orders $26$, $30$ and $50$.

For each base order $p$ and target order $n$, the search checks the
$\binom{n-1}{p-1}$ positive ordered part vectors for every base.  The rows
through $p=9$ are exhaustive, while those for $10\le p\le12$ apply the exact
necessary screens described above before likelihood evaluation.  The
screened order-$11$ and order-$12$ bases from the $n=13$ searches are reused
for $n=14$.  The order-$12$ stream was divided into $4\,096$ shards; the
merged input counts sum to all $165\,091\,172\,592$ unlabelled bases and the
retained file contains $140\,474$ distinct graph6 strings.  The order-$12$,
$n=13$ CPU entry includes $125\,939.981$ seconds for the shared base screen
and $67.805$ seconds for the composition search.
Parallel runs are merged only after verifying the total numbers of bases and
blow-ups.  Table~\ref{tab:blowupsearch} records
all exact composition searches used here.  CPU times are aggregate
process times for the blow-up search program; they exclude CPU time used by
\texttt{geng} for graph generation and are not elapsed wall times.

\begin{table}[htbp]
\centering
\small
\begin{tabular}{@{}rrrrrr@{}}
\toprule
$n$ & $p$ & bases & vectors/base & likelihood evals & CPU seconds\\
\midrule
$13$ & $7$ & $1\,044$ & $924$ & $964\,656$ & $13.685$\\
$14$ & $7$ & $1\,044$ & $1\,716$ & $1\,791\,504$ & $33.791$\\
$16$ & $7$ & $1\,044$ & $5\,005$ & $5\,225\,220$ & $171.449$\\
$18$ & $7$ & $1\,044$ & $12\,376$ & $12\,920\,544$ & $625.523$\\
$13$ & $8$ & $12\,346$ & $792$ & $9\,778\,032$ & $186.062$\\
$14$ & $8$ & $12\,346$ & $1\,716$ & $21\,185\,736$ & $557.457$\\
$16$ & $8$ & $12\,346$ & $6\,435$ & $34\,372\,082$ & $1\,440.603$\\
$18$ & $8$ & $12\,346$ & $19\,448$ & $156\,498\,056$ & $12\,447.239$\\
$13$ & $9$ & $274\,668$ & $495$ & $135\,960\,660$ & $4\,078.112$\\
$14$ & $9$ & $274\,668$ & $1\,287$ & $353\,497\,716$ & $17\,078.575$\\
$13$ & $10$ & $12\,005\,168$ & $220$ & $528\,765$ & $31.630$\\
$14$ & $10$ & $12\,005\,168$ & $715$ & $80\,258$ & $13.454$\\
$13$ & $11$ & $1\,018\,997\,864$ & $66$ & $555\,612$ & $598.681$\\
$14$ & $11$ & $1\,018\,997\,864$ & $286$ & $79\,908$ & $9.805$\\
$13$ & $12$ & $165\,091\,172\,592$ & $12$ & $758\,030$ & $126\,007.786$\\
$14$ & $12$ & $165\,091\,172\,592$ & $78$ & $121\,413$ & $18.206$\\
\bottomrule
\end{tabular}
\caption{Exact independent-blow-up searches.  Here $p$ is the base order;
the order-$8$ rows for $n=13,14$ record intermediate staged checks.  Rows through $p=9$
evaluate every base/part-vector representation.  The rows with
$10\le p\le12$ use the necessary automorphism screens described in the text;
their bases and vectors/base columns give the full unscreened mathematical
input counts.  The $p=11$ and $p=12$,
$n=14$ searches reused the bases saved during the corresponding $n=13$
streams, so their CPU times exclude the shared initial scans.}
\label{tab:blowupsearch}
\end{table}

The remaining order-$13$ and order-$14$ searches use the complete tables of
marks for $\mathfrak S_{12}$ and $\mathfrak S_{13}$ in \textsc{TomLib}.  For
$\mathfrak S_{14}$, subgroup classes above the required threshold are obtained
by recursively taking maximal-subgroup class representatives and identifying
subgroups conjugate in $\mathfrak S_{14}$.  The traversal has $6280$ classes
and no failed maximal-subgroup computations.  A separate C program expands
the group orbits on unordered pairs using disjoint checked unsigned $128$-bit
masks and applies the exact twin screens described above.  Graphs are
canonically labelled by \texttt{nauty}, deduplicated, and passed to the same
checked likelihood program used for the blow-up computations.  Independent
residue shards sum to the stated orbital representation counts before the
canonical merge.

For the partial order-$16$ calculation, a checkpointed maximal-subgroup
traversal above automorphism threshold $1556$ produces $116\,597$ subgroup
classes and no failed maximal-subgroup computations.  The forced-twin test
leaves $44$ twin-free-possible classes and $4580$ orbital representations.
Direct twin testing leaves $738$ representations, canonical labelling leaves
$128$ graphs, and exact likelihood evaluation finds no graph below
$K_{8,8}$.  This certifies the twin-free part only; the $116\,553$ subgroup
classes that force a twin pair are not expanded into graphs by this
calculation.

For the vertex-transitive search, the GAP 4.15.1 transitive group library was
used \cite{GAP}.  At orders $13$ and $14$ there
are $9$ and $63$ transitive permutation groups.  Enumerating all unions of
their orbits on unordered vertex pairs gives $152$ and $1684$ graph
representations, which canonical labelling reduces to $14$ and $56$
isomorphism classes.  Likelihoods are then evaluated by the integer-scaled
subset recurrence above.

The degree-sequence calculation enumerates all graphical sequences and applies
Lemma~\ref{lem:screen} by integer cross multiplication.
For its second deletion, the neighbours of each possible deleted vertex are
assigned among the remaining degree classes so as to minimise the resulting
lower bound; allowing the minimizing assignment independently for each
vertex is a relaxation and therefore cannot discard a genuine competitor.

\end{document}